\documentclass[smallextended]{svjour3}

\usepackage{amsmath}
\usepackage{amsthm}
\usepackage{amssymb}
\usepackage{algorithm,algorithmic}
\usepackage{listings}
\usepackage{color}
\usepackage{bm}
\usepackage{graphicx}
\usepackage{url}
\usepackage{tikz}
\usepackage{tkz-fct}
\usetikzlibrary{calc,arrows}
\usepackage[siunitx]{circuitikz}
\usepackage{pgfplots}
\usepackage{multirow}
\pgfplotsset{compat=1.8}
\usepackage{mathtools} 

\definecolor{codegreen}{rgb}{0,0.6,0}
\definecolor{codegray}{rgb}{0.5,0.5,0.5}
\definecolor{codepurple}{rgb}{0.58,0,0.82}
\definecolor{codeblue}{rgb}{0,0.2,0.8}
\definecolor{backcolour}{rgb}{0.95,0.95,0.92}

\lstdefinestyle{mystyle}{
	backgroundcolor=\color{backcolour},   
	commentstyle=\color{codegreen},
	keywordstyle=\color{codeblue},
	numberstyle=\tiny\color{codegray},
	stringstyle=\color{codepurple},
	basicstyle=\footnotesize,
	breakatwhitespace=false,         
	breaklines=true,                 
	captionpos=b,                    
	keepspaces=true,                 
	numbers=left,                    
	numbersep=5pt,                  
	showspaces=false,                
	showstringspaces=false,
	showtabs=false,                  
	tabsize=2
}



\def\inum#1{\protect{\textrm{\boldmath $#1$}}}
\def\Solset{S(\inum{p})}
\def\bnum#1{\protect{\mathbf{#1}}}
\newcommand{\ints}{\mathbb{I}\reals}
\newcommand{\reals}{\mathbb{R}}
\newcommand{\R}{\reals}
\DeclarePairedDelimiter\braces{\lbrace}{\rbrace}	
\DeclarePairedDelimiter\parentheses{\lparen}{\rparen}	
\DeclareMathOperator{\hull}{hull}	
\def\ihull#1{\hull\parentheses*{#1}}		
	
\newcommand{\mna}[1]{{\mathcal{#1}}}
\def\Mid#1{{#1}^c}              		
\def\Rad#1{{#1}^\Delta}         		
\def\nref#1{$(\ref{#1})$}


\addtolength\arraycolsep{1pt}



\begin{document}

\title{On preconditioning and solving an extended class of interval parametric linear systems\thanks{M.~Hlad\'{\i}k was supported by the Czech Science Foundation Grant P403-18-04735S.}}

\titlerunning{On preconditioning and solving an extended class of interval parametric systems}

\author{Iwona Skalna\footnote{AGH University of Science and Technology, Krak{\'o}w, Poland}\\
Milan Hlad{\'i}k\footnote{Department of Applied Mathematics, 
    Charles University, Prague, Czech Republic}}

\author{Iwona Skalna \and Milan Hlad\'{i}k}
\institute{
  I. Skalna \at
  AGH University of Science and Technology, Poland, \\
  \email{skalna@agh.edu.pl}    
\and
  M. Hlad\'{i}k \at 
    Department of Applied Mathematics, 
    Charles University, 
    Malostransk\'e n\'am. 25, 118 00, 
    Prague, Czech Republic,\\
  \email{hladik@kam.mff.cuni.cz} 
}

\date{Received: date / Accepted: date}

\maketitle

\begin{abstract}
We deal with interval parametric systems of linear equations and the goal is to solve such systems, which basically comes down to finding an~enclosure for a~parametric solution set. Obviously we want this enclosure to be as tight as possible. The review of the available literature shows that in order to make a~system more tractable most of the solution methods use left preconditioning of the system by the midpoint inverse.  Surprisingly, and in contrast to standard interval linear systems, our investigations have shown that double preconditioning can be more efficient than a~single one, both in terms of checking the regularity of the system matrix and enclosing the solution set. Consequently, right (which was hitherto mentioned in the context of checking regularity of interval parametric matrices) and double preconditioning together with the $p$-solution concept enable us to solve a~larger class of interval parametric linear systems than most of existing methods. The applicability of the proposed approach to solving interval parametric linear systems is illustrated by several numerical examples.
\keywords{preconditioning \and revised affine forms \and interval parametric linear systems \and parametric solution}
\subclass{15A06 \and 15B99 \and 65G40 \and 68U99}
\end{abstract}


\section{Introduction}
\label{sec:intro}

Solving systems of parametric linear equations with parameters varying within prescribed intervals is an important part of many scientific and engineering computations. The reason is that (parametric) linear systems are prevalence in virtually all areas of science and engineering, and uncertainty is a~ubiquitous aspect of most real world problems. 

Consider the following family of systems of parametric linear equations
\begin{equation}
\label{eq:parintlinsys}
\{A(p)x=b(p),\,p\in\inum{p}\},
\end{equation}
where $A(p)\in\reals^{n\times n}$, $b(p)\in\reals^n$, and $\inum{p}$ is a~$K$-dimensional interval vector. The entries of $A(p)$ and $b(p)$ are assumed, in general case, to be real-valued continuous functions\footnote{They usually have closed form expressions.} of a~vector of parameters $p$, i.e.,
\[
A_{ij},b_i\colon \mathbb{R}^K\rightarrow\mathbb{R},\ i,j=1,\ldots,n.
\]

A~particular form of \nref{eq:parintlinsys} arises when there are affine-linear dependencies. This means that the entries of $A(p)$ and $b(p)$ depend linearly on $p=(p_1,\dots,p_K)$, that is, $A(p)$ and $b(p)$ have, respectively, the following form:
\[
A(p)=A^{(0)}+\sum_{k=1}^KA^{(k)}p_k,\quad b(p)=b^{(0)}+\sum_{k=1}^Kb^{(k)}p_k.
\]

The family (\ref{eq:parintlinsys}) is often written as
\begin{equation}
\label{eq:pils}
A(\inum{p})x=b(\inum{p})
\end{equation}
to underline its strong relationship with interval linear systems \cite{Neumaier:1990:IMS}. Indeed, a~classical nonparametric $n\times n$ interval linear system $\inum{A}x=\inum{b}$ can be considered as a~special case of an~interval parametric linear system with $n(n+1)$ interval parameters. However, \textit{interval parametric linear systems} (IPLS) usually provide more precise results, especially for the real-life problems involving uncertainties. That is why numerical methods for solving such systems are of great value.

The solution set of the system (\ref{eq:pils}) can be defined in many ways, however, usually the so-called \textit{united parametric solution set} is considered, which is defined as the set of solutions to all systems from the family (\ref{eq:parintlinsys}), i.e., 
\begin{equation}
\label{eq:unitedsolset}
\Solset\triangleq\{x\in\reals\mid\exists\,p\in\inum{p}\;:\;A(p)x=b(p)\}.
\end{equation}
Since handling $S(\inum{p})$ is, in general, computationally very hard\footnote{The results on the complexity of various problems related to interval matrices and interval linear systems can be found, e.g., in \cite{HorHla2017a,KreLak1998,Rohn:HILP}; since an interval matrix (interval linear system) can be treated as a~special case of an~interval parametric matrix (interval parametric linear system) with each parameter occurring only once, all these results are valid for interval parametric matrices and interval parametric linear systems, too.}, instead of the solution set itself one usually seeks for an interval vector that encloses $\Solset$.
A~more general approach to the problem of solving IPLS was developed by Kolev \cite{Kolev:2016:IAFD,Kolev:2014:PSL}. He introduced a~new type of solution, the so-called \textit{parametric solution} (or shortly $p$-\textit{solution}), which has the following parametric form
\begin{equation}
\inum{x}(p)=Fp+\inum{a},
\end{equation}
where $F\in\reals^{n\times K}$ and $\inum{a}$ is an $n$-dimensional one column vector. The main advantage of the $p$-solution over the classical interval solution is that it preserves information about linear dependencies between $x=x(p)$ and $p$ (see Fig.~\ref{fig:psol_vs_intsol}).

\pgfplotsset{compat=1.11}
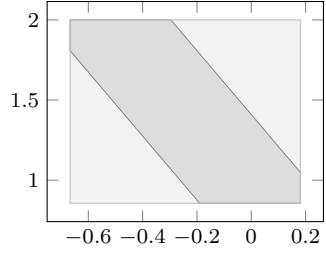
\begin{figure}
\centering
\begin{tikzpicture}
\begin{scope}[xshift=8.25cm]
\begin{axis}[height=4.5cm]
\addplot[gray,fill=gray,fill opacity=0.2] coordinates {
	(-0.666906517,2.000252053)
	(-0.666906517,1.807677268)
	(-0.18983027,0.85570584)
	(-0.004442137,0.85570584)
	(0.180945996,0.85570584)
	(0.180945996,1.048280625)
	(-0.296130251,2.000252053)
	(-0.481518384,2.000252053)
	(-0.666906517,2.000252053)
};
\addplot[lightgray,fill=lightgray,fill opacity=0.2] coordinates {
	(-0.666906517,2.000252053)
	(-0.666906517,0.85570584)
	(0.180945996,0.85570584)
	(0.180945996,2.000252053)
	(-0.666906517,2.000252053)
};
\end{axis}
\end{scope}
\node at (4.5,1.4) {$
	\begin{array}{c}
	\begin{pmatrix}p_1+p_2 & 1\\[4pt] 0 & 2p_2\end{pmatrix}\begin{pmatrix}
	x_1 \\[4pt] x_2
	\end{pmatrix}=\begin{pmatrix}
	1 \\[4pt] 2
	\end{pmatrix}\\[18pt]
	\begin{array}{l}
	p_1\in[1,2] \\[4pt]
	p_2\in[0.5,1]
	\end{array}
	\end{array}$};
\end{tikzpicture}
\caption{Comparison of the $p$-solution (dark gray region) and the interval solution (light gray region) for a~given two dimensional IPLS with two parameters}
\label{fig:psol_vs_intsol}
\end{figure}


\subsection{Intervals and affine forms}

A~real compact interval is $\inum{x}=[\underline x,\overline x]=\{x\in\mathbb{R}\mid\underline x\leqslant x\leqslant\overline x\}$, where $\underline x,\overline x\in\reals$. The mid-point $x^c=(\underline x+\overline x)/2$, radius $x^{\Delta}=(\overline x-\underline x)/2$, and the maximal absolute value (\textit{magnitude}) $|\inum{x}|=\max\{|x|\;|\;x\in\inum{x}\}$ are applied to interval vectors and matrices componentwise. By $\ints^n$ and $\ints^{n\times m}$ we denote the set of all $n$-dimensional interval vectors and the set of all $n\times m$ interval matrices, respectively. The identity matrix of size $n$ is denoted by~$I_n$, and for a~non-empty bounded set $S\subset\reals^n$, its \textit{interval hull} is defined as
\[
\ihull{S}=\bigcap\{\inum{y}\in\ints^n\mid S\subseteq\inum{y}\}.
\]
The $i$-th column of a~matrix $A$ is denoted by $A_{*i}$ and $A=(A_{*1}\,\ldots\,A_{*n})$ is a~column-wise notation of $A$. The spectral radius of a~real matrix $A$ is denoted by $\rho(A)$. 

\paragraph{Revised affine forms (RAF).} A~revised affine form (cf.~\cite{Skalna:2018:PIAS,SkaHla:2017:CMAAF,VuHaroud:2004:GSCM}) of length $n$ is defined as a~sum of a~standard affine form (see, e.g., \cite{Comba:1993:AAA}) and a~term that represents all errors introduced during a~computation (including rounding errors), i.e.,
\begin{equation}
\label{eq:reducedaaf}
\hat x=x_0+e^Tx+x_r[-1,1],
\end{equation}
where $e=(\varepsilon_1,\ldots,\varepsilon_n)^T$ and $x=(x_1,\ldots,x_n)^T$. The {\em noise symbols} $\varepsilon_i$, $i=1,\ldots,n$, are unknown, but assumed to vary independently within the interval $[-1,\,1]$, and $x_r\geqslant 0$ is the radius of the \textit{accumulative error} $x_r[-1,1]$. 
If two revised affine forms $\hat x$ and $\hat y$ share the same noise symbols $\varepsilon_i$, this means that there is a~partial dependency between them, and partial deviations $x_i$ and $y_i$ determine the magnitude and sign of this dependency. The length of a~revised affine form equals to the number of initial uncertain parameters and remains unchanged during the same computation, unless new independent parameters are introduced. Let us notice that a~revised affine form is an~interval-affine function of the noise symbols, so it can be written as $\inum{x}(e)=e^Tx+\inum{x}$, where $\inum{x}=x_0+x_r[-1,1]$. 

In order to perform computation on RAFs, we must define elementary operations for them. 
It is not hard to see that affine-linear operations result straightforwardly in a~revised affine form. However, the result of a~nonlinear operation must be approximated by a~revised affine form, and the error of this approximation must be taken into account. An overview of operations on revised affine forms can be found, e.g., in \cite{Skalna:2018:PIAS,SkaHla:2017:CMAAF}. Below, we recall the Chebyshev minimum-error multiplication of revised affine forms (cf.~\cite{SkaHla:2017:CMAAF}), which is especially important for the quality of the enclosures produced by interval-affine methods. Thus, given two revised affine forms $\hat x=e^Tx+x_r[-1,1]$ and $\hat y=e^Ty+y_r[-1,1]$, the revised affine form $\hat z$ that approximates $\hat x\cdot\hat y$ is defined by the following formula:
\begin{equation}
\hat z=x_0y_0+d^c+e^T(x_0y_i+y_0x_i)+\left(|x_0|y_r+|y_0|x_r+d^{\Delta}\right)[-1,1],
\end{equation}
where $[d^c-d^{\Delta},d^c+d^{\Delta}]$ is the range of $f(x,y)=xy$ on the joint range (cf.~\cite{Comba:1993:AAA}) $\langle e^Tx+x_r\varepsilon_x,e^Ty+y_r\varepsilon_y\rangle$, which is the set of all possible values of
$(\hat x,\hat y)$ when the noise symbols $\varepsilon_1,\ldots,\varepsilon_n$ are independently chosen in the interval $[-1,1]$.

\paragraph{Affine transformation.} An interval parameter $\inum{p}_k$ can be represented by the revised affine form $\hat p_k=\Mid{p}_k+\Rad{p}_k\varepsilon_k$. If, for $k=1,\ldots,K$, we substitute $\hat p_k$ for $\inum{p}_k$ and then perform respective operations on revised affine forms $\hat p_k$, we obtain the following interval-affine linear system:
\begin{equation}
\label{eq:affintlinsys}
\bnum{C}(e)x=\bnum{c}(e),
\end{equation}
where
\begin{subequations}
	\begin{alignat}{4}
	\label{eq:pils_a_b_aff}
	\bnum{C}(e) &=C^{(0)}+\textstyle\sum_{k=1}^{K}C^{(k)}\varepsilon_k+C^r[-1,1]=
	\textstyle\sum_{k=1}^{K}C^{(k)}\varepsilon_k+\bnum{C}, \\
	\bnum{c}(e)&=c^{(0)}+\textstyle\sum_{k=1}^{K}c^{(k)}\varepsilon_k+c^r[-1,1]=
	\textstyle\sum_{k=1}^{K}c^{(k)}\varepsilon_k+\bnum{c}.
	\end{alignat}
\end{subequations}
We call the transition from the system \nref{eq:pils} to the system \nref{eq:affintlinsys} the \textit{affine transformation}.

\paragraph{Remark.} Whenever the vector of parameters is denoted by $e$, this means that its components vary within the interval $[-1,1]$. Otherwise, i.e., if the vector of parameters is denoted by $p$, this means that we do not make any specific assumptions about the range of variability of the components of $p$.

Recall the observation that was shown, e.g., in Skalna \& Hlad{\'i}k \cite{SkaHla2017b}.

\begin{proposition}
	\label{prop:paramtoaff}
	The solution set of the system (\ref{eq:pils}) is included in the solution set of the system (\ref{eq:affintlinsys}).
\end{proposition}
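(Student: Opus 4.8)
The plan is to prove the inclusion directly, element by element. I would take an arbitrary $x\in\Solset$; by definition \nref{eq:unitedsolset} there is a parameter value $p\in\inum{p}$ with $A(p)x=b(p)$. Since $p_k\in\inum{p}_k=[\Mid{p}_k-\Rad{p}_k,\,\Mid{p}_k+\Rad{p}_k]$, I can write $p_k=\Mid{p}_k+\Rad{p}_k\varepsilon_k$ for a suitable $\varepsilon_k\in[-1,1]$, and collect these into $e=(\varepsilon_1,\dots,\varepsilon_K)^T\in[-1,1]^K$. This is precisely the substitution underlying the affine transformation, so the affine system \nref{eq:affintlinsys} is evaluated at this very $e$. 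The goal is then to exhibit, for this fixed $e$, realizations $C\in\bnum{C}(e)$ and $c\in\bnum{c}(e)$ with $Cx=c$; the natural candidates are $C=A(p)$ and $c=b(p)$.

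The crux is the fundamental enclosure property of revised affine arithmetic: for the particular choice of noise symbols $e$ fixed above, the exact value of every entry $A_{ij}(p)$ (resp. $b_i(p)$) lies in the range of the corresponding revised affine form $\bnum{C}_{ij}(e)$ (resp. $\bnum{c}_i(e)$). I would establish this by induction on the expression trees that build $A_{ij}$ and $b_i$ from the parameters $p_k$. The base case is the substitution $\hat p_k=\Mid{p}_k+\Rad{p}_k\varepsilon_k$, which reproduces $p_k$ exactly at the chosen $\varepsilon_k$; affine-linear operations are carried out without error and hence preserve the exact value pointwise in $e$; and each nonlinear operation is replaced by a revised affine form whose accumulated-error term $x_r[-1,1]$ is, by construction (e.g.\ the Chebyshev multiplication recalled above), wide enough to contain the deviation between the true result and its affine part. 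Because all entries share the same noise symbols $\varepsilon_k$ built from the common $\hat p_k$, the \emph{single} vector $e$ determined by $p$ recovers $p_k$ in every entry simultaneously, while the per-entry accumulated-error symbols absorb the remaining nonlinear discrepancies.

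Granting this, the conclusion is immediate: with $C=A(p)\in\bnum{C}(e)$ and $c=b(p)\in\bnum{c}(e)$ we have $Cx=A(p)x=b(p)=c$, so $x$ solves \nref{eq:affintlinsys}. Since $x\in\Solset$ was arbitrary, $\Solset$ is contained in the solution set of \nref{eq:affintlinsys}, which is the assertion.

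The main obstacle is the inductive enclosure step, and more precisely its consistency across entries: one must argue that a single realization of each shared noise symbol, together with an admissible choice of the accumulated-error symbol attached to each form, recovers all the exact entries $A_{ij}(p)$ and $b_i(p)$ at once. This is exactly what the pointwise inclusion property of the individual RAF operations delivers—fix $e$ from $p$, then select the accumulated-error values entrywise—so the correctness of the elementary operations, rather than any global structural fact about the system, carries the proof.
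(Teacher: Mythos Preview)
Your argument is correct and is precisely the standard one: fix $p\in\inum{p}$, pass to the corresponding noise vector $e$, and invoke the pointwise enclosure property of the elementary RAF operations (induction over the expression tree) to get $A(p)\in\bnum{C}(e)$ and $b(p)\in\bnum{c}(e)$ entrywise, whence $x$ solves \nref{eq:affintlinsys}. You also correctly isolate the only subtle point, namely that the shared noise symbols are fixed once by $p$ while the accumulated-error terms may be chosen independently per entry.

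There is nothing to compare against, however: the paper does not supply a proof of this proposition. It is stated as a recalled fact with a reference to Skalna \& Hlad{\'i}k~\cite{SkaHla2017b}, so the ``paper's own proof'' is simply the citation. Your write-up is exactly the argument one would expect to find behind that citation.
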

In view of the above, we can restrict our considerations to interval-affine linear systems, i.e., systems with entries being revised affine forms. In fact, we can restrict the considerations to interval parametric linear systems with affine-linear dependencies only, since each accumulative error can be treated as an~independent noise symbol.

\subsection{Preconditioning}

In order that the parametric solution set $\Solset$ be bounded, the matrix $A(\inum{p})$ must be regular, i.e., $A(p)$ must be nonsingular for each $p\in\inum{p}$. If $A(p)$ is singular for some $p\in\inum{p}$, then $A(\inum{p})$ is singular and the solution set is either empty (which is rare) or unbounded.

So, regularity of $A(\inum{p})$ is sufficient for the boundedness of a~parametric solution set. However, most of the methods for solving IPLS require that $A(\inum{p})$ is so called \emph{strongly regular} (cf. \cite{Popova:2004:SRPIM,Pop2018a,Skalna:2017:SRPM,Skalna:2018:PIAS}). Notice that there is ambiguity in using the notion of strong regularity for parametric matrices; we adopt here the following definition.

\begin{definition}
An interval parametric matrix $A(\inum{p})$ is strongly regular if the midpoint matrix $A(\Mid{p})$ is nonsingular and at least one of the interval hulls $\inum{H}$, $\inum{H}'$ of the matrices
\begin{equation}
	\label{eq:b_and_bprime}
	H(\inum{p})=\left\{A(\Mid{p})^{-1}A(p)\mid p\in\inum{p}\right\}, \quad
	H'(\inum{p})=\left\{A(p)A(\Mid{p})^{-1}\mid p\in\inum{p}\right\},
\end{equation}
are regular.
\end{definition}

Let us notice that if there are only affine-linear dependencies in \nref{eq:pils}, then
\begin{equation}
\label{eq:b_and_bprime_lincase}
\inum{H}=I_n+H^{\Delta}[-1,1], \quad
\inum{H}'=I_n+(H')^{\Delta}[-1,1],
\end{equation}
where $H^{\Delta}=\sum_{k=1}^K\left|A(p^c)^{-1}A^{(k)}\right|p^{\Delta}_k$, $(H')^{\Delta}=\sum_{k=1}^K\left|A^{(k)}A(p^c)^{-1}\right|p^{\Delta}_k$.

The concept of strong regularity is strictly related to \textit{preconditioning}. As it is well known, preconditioning aims to make a~system more suitable for numerical (especially iterative) methods. Left preconditioning, which corresponds to linear transformation of the right-hand side, is used the most often. Indeed, the majority of methods for solving IPLS left-precondition the system by the midpoint inverse $A(\Mid{p})^{-1}$ and require/check regularity of the interval matrix $\inum{H}$. However, strong regularity of $A(\inum{p})$ can be ascertained via regularity of $\inum{H}'$ despite $\inum{H}$ being not regular (cf.~\cite{Popova:2004:SRPIM}).

In this paper, we propose a~novel\footnote{To our best knowledge, both right and double preconditioning were not yet considered in the context of solving interval parametric linear systems.} approach to solving interval parametric linear systems which employs right and double preconditioning. The main advantages of the proposed approach are the following:
\begin{itemize}
\item[$-$] it handles interval parametric linear systems with both linear and nonlinear dependencies,
\item[$-$] it produces a~$p$-solution represented by a~revised affine form, which preserves first order dependencies between the solution and input parameters,
\item[$-$] it enables to solve a~wider class of problems than most known methods for solving IPLS, and
\item[$-$] it enables to handle larger uncertainties. 
\end{itemize}

\section{Right preconditioning}
\label{sec:main}
In order to make an~IPLS more tractable, it is usually left preconditioned with the inverse of the midpoint matrix (midpoint inverse), $R=(\Mid{A})^{-1}$, or rather its numerical approximation (cf.~\cite{Hladik:2016:OPFIM,Skalna:2017:SRPM}). Left preconditioning of the system \nref{eq:pils} yields the new system
\begin{equation}
\label{eq:affintlinsys2_precond}
H(\inum{p})x=d(\inum{p}),
\end{equation}
where $H(\inum{p})=RA(\inum{p})$, $d(\inum{p})=Rb(\inum{p})$.
The following was shown in many papers, see, e.g., Skalna \cite{Skalna:2018:PIAS}.

\begin{proposition}
The solution set of the system (\ref{eq:affintlinsys}) is included in the solution set of the system (\ref{eq:affintlinsys2_precond}).
\end{proposition}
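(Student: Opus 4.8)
The plan is to establish the inclusion one parameter-realization at a time, exploiting that left-multiplication by the fixed real matrix $R=(\Mid{A})^{-1}$ commutes with choosing a point of the parameter domain. First I would spell out what membership in each solution set means: $x$ lies in the solution set of \nref{eq:affintlinsys} precisely when there is an admissible assignment of all noise symbols — the shared symbols $\varepsilon_1,\dots,\varepsilon_K$ together with the independent accumulative-error symbols carried by the terms $C^r[-1,1]$ and $c^r[-1,1]$ — for which $\bnum{C}(e)x=\bnum{c}(e)$ holds as an ordinary point linear equation $\tilde C x=\tilde c$. Membership in the solution set of \nref{eq:affintlinsys2_precond} is described the same way, but for the revised-affine forms $H=R\bnum{C}(e)$ and $d=R\bnum{c}(e)$.

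Given such an $x$ together with a witnessing realization $\tilde C x=\tilde c$, I would left-multiply by $R$ to get $R\tilde C\,x=R\tilde c$. Since each $\varepsilon_k$ enters $\bnum{C}(e)$ linearly through $C^{(k)}$, the affine-linear part transforms exactly: in $R\bnum{C}(e)$ the coefficient of $\varepsilon_k$ is $RC^{(k)}$ and the midpoint is $RC^{(0)}$, with the analogous statement for the right-hand side. Hence $R\tilde C$ agrees with $H$ evaluated at the same $\varepsilon$ up to the accumulative-error contribution, and it remains only to check that this contribution is admissible for $H$.

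The one step that needs care — and the reason the statement is an inclusion rather than an equality — is the accumulative error. Left-multiplying the entrywise-independent terms $C^r_{jl}[-1,1]$ by $R$ produces, in row $i$ and column $l$, the quantity $\sum_j R_{ij}C^r_{jl}\eta_{jl}$ with each $\eta_{jl}\in[-1,1]$; when this is repackaged into a single fresh accumulative-error symbol for entry $(i,l)$ of $H$, its radius is taken to be $\sum_j|R_{ij}|C^r_{jl}$, i.e.\ $|R|C^r$ entrywise. The key estimate is then
\[
\Bigl|\textstyle\sum_j R_{ij}C^r_{jl}\eta_{jl}\Bigr|\ \leqslant\ \sum_j|R_{ij}|C^r_{jl}\qquad(\eta_{jl}\in[-1,1]),
\]
which shows that the transformed accumulative error of the witnessing realization lies within the (decorrelated, and hence possibly wider) accumulative-error bounds of $H$; the same estimate applies to $d$. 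I expect this bookkeeping — verifying that $R\tilde C$ and $R\tilde c$ are genuine realizations of the revised-affine forms $H$ and $d$ — to be the main, indeed the only, obstacle, everything else being an exact linear identity. Once it is in place, $R\tilde C\,x=R\tilde c$ exhibits $x$ as a point of the solution set of \nref{eq:affintlinsys2_precond}, completing the inclusion.
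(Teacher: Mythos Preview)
The paper does not prove this proposition in-text; it only states it with a reference to the literature. So there is no in-paper argument to compare against.

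Your argument is correct. You pick the right structure---fix a realization $\tilde Cx=\tilde c$, left-multiply by the nonsingular real matrix $R$, and verify that $R\tilde C$ and $R\tilde c$ are admissible realizations of the preconditioned data---and you isolate the only non-automatic step, namely the accumulative-error term. The inequality $\bigl|\sum_j R_{ij}C^r_{jl}\eta_{jl}\bigr|\leqslant\sum_j|R_{ij}|C^r_{jl}$ is precisely the subdistributivity that makes the inclusion in general one-sided rather than an equality.

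One remark on scope. As the paper writes it, system~\nref{eq:affintlinsys2_precond} is $RA(\inum{p})x=Rb(\inum{p})$, i.e.\ preconditioning is applied to the original parametric data, and the paper has just noted that one may restrict to affine-linear dependencies. In that setting $C^r=0$ and $c^r=0$, so your accumulative-error bookkeeping collapses to the one-line observation that multiplying both sides by an invertible real matrix, for the same parameter value, does not change the set of solutions. Your fuller treatment is nonetheless valid and is exactly what is needed when one works directly with revised affine forms carrying nonzero error radii.
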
 

Analogously, right preconditioning of the system \nref{eq:affintlinsys} by matrix $R$ yields the system
\begin{equation}
\label{eq:affintlinsys2_postcond}
H'(\inum{p})y=b(\inum{p}),
\end{equation}
where $H'(\inum{p})=A(\inum{p})R$ and $y$ is such that $Ry=x$.
Right preconditioning for standard interval linear systems of equations was investigated in \cite{Gol2005,Neu1987}, but for parametric systems it seems that the problem was not analyzed in detail yet (besides the problem of regularity \cite{Popova:2004:SRPIM}).

\begin{proposition}
The solution set of the system (\ref{eq:affintlinsys}) is included in the product of $R$ and the solution set of the system (\ref{eq:affintlinsys2_postcond}).
Consequently, $\inum{x}(p):=R\cdot \inum{y}(p)$ is a~$p$-solution of (\ref{eq:affintlinsys}), where $\inum{y}(p)$ is a~$p$-solution of (\ref{eq:affintlinsys2_postcond}).
\end{proposition}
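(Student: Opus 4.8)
The plan is to observe that right preconditioning of \nref{eq:affintlinsys} is precisely the linear change of variables $x=Ry$, and to combine this with the nonsingularity of the preconditioner $R$. I would treat the two assertions of the proposition in turn, first the set inclusion and then its consequence for the $p$-solution.

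First I would prove the inclusion. Take any $x$ in the solution set of \nref{eq:affintlinsys}; by definition there is a parameter value $p\in\inum{p}$ with $A(p)x=b(p)$. Since $R$ is a nonsingular real matrix (the midpoint inverse, or a nonsingular numerical approximation of it), I set $y:=R^{-1}x$, so that $x=Ry$. Substituting yields $A(p)Ry=b(p)$, that is $H'(p)y=b(p)$ with the \emph{same} $p\in\inum{p}$, so $y$ belongs to the solution set of \nref{eq:affintlinsys2_postcond}. Hence $x=Ry$ lies in the product of $R$ with that solution set, which is the claimed inclusion. Read backwards, the same computation shows the two solution sets actually correspond bijectively under $y\mapsto Ry$, but only the stated inclusion is needed.

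Second, I would transfer this to the $p$-solution. Let $\inum{y}(p)=F_y p+\inum{a}_y$ be a $p$-solution of \nref{eq:affintlinsys2_postcond}, i.e.\ for every $p\in\inum{p}$ each solution $y$ of $H'(p)y=b(p)$ satisfies $y\in F_y p+\inum{a}_y$. Applying $R$ termwise gives $R\,\inum{y}(p)=(RF_y)p+R\inum{a}_y$; enclosing the constant part by an interval vector $\inum{a}_x\supseteq\{Ra\mid a\in\inum{a}_y\}$ produces an object of the required form $\inum{x}(p)=(RF_y)p+\inum{a}_x$. To verify it is a genuine $p$-solution, fix $p\in\inum{p}$ and $x$ with $A(p)x=b(p)$; by the first part $y=R^{-1}x$ solves $H'(p)y=b(p)$, hence $y\in F_y p+\inum{a}_y$, and therefore $x=Ry\in(RF_y)p+R\inum{a}_y\subseteq\inum{x}(p)$, as required.

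The argument is essentially routine once the change of variables is in place, and it runs parallel to the cited left-preconditioning statement for \nref{eq:affintlinsys2_precond}. I expect no serious obstacle, but two points require care. The first is the nonsingularity of $R$, which is exactly what permits passing from $x$ to $y=R^{-1}x$ and back; without it the inclusion can fail. The second is that the exact image $R\inum{a}_y$ of the box $\inum{a}_y$ is in general a parallelepiped, so casting $R\,\inum{y}(p)$ back into $p$-solution form forces an over-approximation of this set by the interval vector $\inum{a}_x$. This enlargement is harmless for the enclosure property but is where some tightness of the $p$-solution is sacrificed; in a revised-affine-form implementation it is carried out automatically by the affine matrix–vector product, so no extra work is needed.
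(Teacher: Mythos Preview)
Your argument is correct and is exactly the natural change-of-variables verification: $x=Ry$ with nonsingular $R$ gives a bijection between the two parametric solution sets, from which the $p$-solution statement follows immediately. The paper in fact states this proposition without proof, so there is no alternative argument to compare against; your write-up simply supplies the routine details the authors left implicit, and your remark about the overestimation incurred when enclosing $R\inum{a}_y$ by an interval box is a useful addition.
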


Most of the methods for solving IPLS perform left preconditioning and require that the matrix $\inum{H}$ is regular, which assures strong regularity of $A(\inum{p})$. However, $A(\inum{p})$ may be ascertained via the regularity of $\inum{H}'$ despite $\inum{H}$ is not regular. This property was already observed, e.g., in Popova~\cite{Pop2018a}.

Left preconditioning is most natural since no extra step is needed to obtain the final solution. When solving interval parametric linear systems, this last step usually causes that the produced enclosures are pretty rough. Using the approach proposed in this paper we can significantly reduce this overestimation.

\paragraph{Properties of right preconditioning.}
Popova \cite{Popova:2014:IESP,PopHla2013} defined a~parameter $p_k$ to be of class one if nonzero elements in $(A^{(k)}\mid b^{(k)})$ are in at most one row, that is, the parameter $p_k$ affects one equation only. If all parameters are of class one, many problems become much easier. For example, we can explicitly describe the solutions of the system \nref{eq:parintlinsys} by
\begin{align*}
|A(\Mid{p})x-b(\Mid{p})|\leqslant\sum_{k=1}^K\Rad{p}_k\left|A^{(k)}x-b^{(k)}\right|.
\end{align*}

\begin{proposition}
If a~parameter $p_k$ is of class one, then it remains to be of class one after right preconditioning.
\end{proposition}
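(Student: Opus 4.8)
The plan is to track how the coefficient data $(A^{(k)}\mid b^{(k)})$ attached to $p_k$ is transformed by right preconditioning, and to exploit the fact that multiplication on the right cannot convert a zero row into a nonzero one. First I would record precisely what right preconditioning changes: expanding $A(p)R=A^{(0)}R+\sum_{k=1}^{K}\left(A^{(k)}R\right)p_k$ shows that the coefficient matrix associated with $p_k$ becomes $A^{(k)}R$, whereas the right-hand side $b(\inum{p})$ is left untouched (recall that the preconditioned system reads $A(\inum{p})R\,y=b(\inum{p})$), so each $b^{(k)}$ is unchanged. Hence the data governing $p_k$ in the new system is the pair $\left(A^{(k)}R\mid b^{(k)}\right)$.

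Next I would invoke the class-one hypothesis for the original system: all nonzero entries of $(A^{(k)}\mid b^{(k)})$ lie in at most one row, say row $i$. In particular every row $j\neq i$ of $A^{(k)}$ is identically zero, and $b^{(k)}_j=0$ for $j\neq i$. The key step is then the elementary identity $\left(A^{(k)}R\right)_{j\ell}=\sum_{m}A^{(k)}_{jm}R_{m\ell}$, which shows that the $j$-th row of $A^{(k)}R$ depends only on the $j$-th row of $A^{(k)}$; consequently a zero row of $A^{(k)}$ produces a zero row of $A^{(k)}R$. Thus $A^{(k)}R$ also has all of its nonzero entries confined to row $i$.

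Finally, combining this with the fact that $b^{(k)}$ is unchanged, and so has its only possible nonzero entry still in position $i$, we conclude that every nonzero entry of $\left(A^{(k)}R\mid b^{(k)}\right)$ lies in row $i$. This is exactly the assertion that $p_k$ is of class one for the right-preconditioned system, which completes the argument.

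I do not expect a genuine obstacle, since the content is structural rather than computational. The one point that deserves emphasis --- and that explains why the statement is specific to \emph{right} preconditioning --- is the asymmetry between the two sides: the $j$-th row of $RA^{(k)}$ mixes \emph{all} rows of $A^{(k)}$, so a single nonzero row of $A^{(k)}$ can spread across every row of $RA^{(k)}$ and destroy the class-one property. Right multiplication, by contrast, acts only within each row and therefore preserves the row-support, which is precisely the feature the proof relies on.
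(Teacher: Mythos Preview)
Your argument is correct and follows essentially the same approach as the paper: the paper encodes the class-one hypothesis as $A^{(k)}=e_iv^T$, $b^{(k)}=e_id$ and observes that $(e_iv^T)R=e_i(v^TR)$ while $b^{(k)}$ is unchanged, which is precisely your ``right multiplication preserves row support'' argument written in outer-product form. Your closing remark on the asymmetry with left preconditioning is a nice addition that the paper does not spell out.
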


\begin{proof}
From the assumptions the matrix $A^{(k)}$ and vector $b^{(k)}$ have the form of $A^{(k)}=e_iv^T$ and $b^{(k)}=e_id$, for some $v\in\R^n$ and $d\in\R$. Right preconditioning with $R$ transforms $A^{(k)}$ into $(e_iv^T)R=e_i(v^TR)$, and $b^{(k)}$ does not change. Thus, the parameter $p_k$ remains to be of class one.
\end{proof}

\paragraph{How to perform left/right preconditioning?}
If we first precondition, and then relax dependencies, we obtain the interval matrix
$$\textstyle
\inum{H}_1:=A^{(0)}+\sum_{k=1}^K (RA^{(k)})\inum{p}_k,
$$
whereas the converse order yields
$$\textstyle
\inum{H}_2:=A^{(0)}+R\left(\sum_{k=1}^K  A^{(k)}\inum{p}_k\right).
$$
Due to subdistributivity of interval arithmetic, $\inum{H}_1\subseteq\inum{H}_2$ (cf.\ \cite{Hladik:2012:EFS}), so the first approach gives tighter or the same enclosures. Equality $\inum{H}_1=\inum{H}_2$ appears for standard interval matrices, and we will extend this class for certain interval parametric matrices as follows.

\begin{proposition}\label{propColOnePrecond}
Suppose that for every $k=1,\dots, K$ the matrix $A^{(k)}$ has at most one non-zero element in each column. Then $\inum{H}_1=\inum{H}_2$.
\end{proposition}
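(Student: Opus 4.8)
The plan is to verify the identity entrywise, using the fact that the only operations occurring in the formation of either matrix are scalings of intervals by \emph{real} numbers and additions of intervals; no product of two genuine intervals ever appears. Since real-scalar multiplication distributes exactly over interval addition, all midpoints will automatically agree, and the whole question will collapse to a comparison of radii, which in turn reduces to a single triangle inequality that the column-sparsity hypothesis forces to be an equality.

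First I would fix indices $i,j$ and write out the two $(i,j)$ entries. In $\inum{H}_1$ the coefficient of $\inum{p}_k$ is the real number $(RA^{(k)})_{ij}=\sum_{l}R_{il}A^{(k)}_{lj}$, so $(\inum{H}_1)_{ij}=A^{(0)}_{ij}+\sum_{k}\big(\sum_{l}R_{il}A^{(k)}_{lj}\big)\inum{p}_k$. In $\inum{H}_2$ one first forms the interval entries $M_{lj}=\sum_{k}A^{(k)}_{lj}\inum{p}_k$ and then sets $(\inum{H}_2)_{ij}=A^{(0)}_{ij}+\sum_{l}R_{il}M_{lj}$. Because multiplication by a real scalar distributes over interval addition, both midpoints equal $A^{(0)}_{ij}+\sum_k(\sum_l R_{il}A^{(k)}_{lj})\Mid{p}_k$, so from here on only the radii matter.

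Next I would compute the two radii. Adding the radii of the independent terms, the radius of $(\inum{H}_1)_{ij}$ is $\sum_k|\sum_l R_{il}A^{(k)}_{lj}|\,\Rad{p}_k$, while that of $(\inum{H}_2)_{ij}$ is $\sum_k\big(\sum_l|R_{il}||A^{(k)}_{lj}|\big)\Rad{p}_k$. Their difference is $\sum_k\big(\sum_l|R_{il}||A^{(k)}_{lj}|-|\sum_l R_{il}A^{(k)}_{lj}|\big)\Rad{p}_k$, and each bracket is nonnegative by the triangle inequality. This reproves the inclusion $\inum{H}_1\subseteq\inum{H}_2$ and, more importantly, pinpoints exactly where equality can fail: the bracket for a given $k$ vanishes iff the nonzero summands $R_{il}A^{(k)}_{lj}$ all share one sign.

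The final step simply invokes the hypothesis. Since $A^{(k)}$ has at most one nonzero element in column $j$, the inner sum $\sum_l R_{il}A^{(k)}_{lj}$ has at most one nonzero term, so no cancellation is possible and the triangle inequality is trivially an equality for every $k$. Hence every bracket vanishes, the radii coincide for all $i,j$, and together with the already-established equality of midpoints this yields $\inum{H}_1=\inum{H}_2$. I do not expect a real obstacle here; the only point requiring care is the bookkeeping observation that the single potential source of dependency loss lives in the sum over $l$ coming from the product by $R$, and not in the scalings by $R_{il}$ or by $\inum{p}_k$, both of which are exact.
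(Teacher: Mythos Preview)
Your proof is correct and is essentially the same argument as the paper's: both hinge on the observation that, for each fixed $k$ and column $j$, the inner sum $\sum_l R_{il}A^{(k)}_{lj}$ collapses to a single term $R_{i\alpha_{jk}}A^{(k)}_{\alpha_{jk}j}$, so no interval overestimation can occur. The only difference is presentational---the paper names the surviving row index $\alpha_{jk}$ and reduces both interval expressions directly to the common form $\sum_k R_{i\alpha_{jk}}A^{(k)}_{\alpha_{jk}j}\inum{p}_k$, whereas you split into midpoint and radius and phrase the same collapse as ``the triangle inequality is trivially an equality.''
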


\emph{Remark.} For right preconditioning the result is similar; matrices $A^{(k)}$ just have to possess at most one non-zero element in each row.

\begin{proof}
Let $i,j$ be fixed indices and for each $k=1,\dots,K$ define $\alpha_{jk}$ to be that index for which $A^{(k)}_{\alpha_{jk},j}\not=0$ (if there is no such an index, then choose an arbitrary one). Now,
\begin{align*}
\left(\inum{H}_1-A^{(0)}\right)_{ij}
&\textstyle
=\left(\sum_{k=1}^K (RA^{(k)})\inum{p}_k\right)_{ij}
=\sum_{k=1}^K \left(\sum_{\alpha=1}^n
   R_{i\alpha}A^{(k)}_{\alpha j}\right)\inum{p}_k \\
&\textstyle=\sum_{k=1}^K R_{i\alpha_{jk}} A^{(k)}_{\alpha_{jk} j}\inum{p}_k,
\end{align*}
and
\begin{align*}
\left(\inum{H}_2-A^{(0)}\right)_{ij}
&\textstyle
=\left(R\sum_{k=1}^K  A^{(k)}\inum{p}_k\right)_{ij}
=\sum_{\alpha=1}^n R_{i\alpha}
   \left(\sum_{k=1}^K A^{(k)}_{\alpha j}\inum{p}_k\right)_{ij}\\\textstyle
&=\sum_{\alpha=1}^n R_{i\alpha}
   \sum_{k:\alpha=\alpha_{jk}} A^{(k)}_{\alpha j}\inum{p}_k
=\sum_{k=1}^K R_{i\alpha_{jk}} A^{(k)}_{\alpha_{jk} j}\inum{p}_k,
\end{align*}
which concludes the proof.
\end{proof}

\begin{remark}[Order of preconditioning and relaxation]
The above result shows that for those types of parametric matrices it is better to first relax dependencies and then to precondition. The saving of time complexity is significant. The computation of $\inum{H}_1$ costs $\mna{O}(Kn^3)$, whereas the  computation of $\inum{H}_2$ costs only $\mna{O}(Kn^2+n^3)$. The assumptions of Proposition~\ref{propColOnePrecond} satisfies a~nontrivial class of parametric matrices, including symmetric, skew-symmetric or Toeplitz and other special interval parametric matrices \cite{AleKre2003,May2017,Hla2008g}.
\end{remark}

\section{Double preconditioning}\label{sec:double_prec}
As shown by Neumaier~\cite{Neumaier:1990:IMS}, left preconditioning is sufficient for checking (strong) regularity of standard interval matrices. Right preconditioning is equivalent with respect to strong regularity. So, using simultaneously both of them is useless. Surprisingly, for interval parametric matrices, the converse is true! The following example illustrates that there are interval parametric matrices for which neither left nor right preconditioning helps, but a~suitable combination of both works.

\begin{example}\normalfont
	\label{ex:ex0}
Consider the interval parametric matrix
\begin{align*}
A(p)=\begin{pmatrix}1-0.5p & -p\\ 0.5p & 1+p\end{pmatrix},\quad
p\in[-1,1].
\end{align*}
Since its midpoint is the identity matrix, i.e., $A(\Mid{p})=I_2$, both left and right preconditioning yield the same interval matrix
$$
\inum{H}=\inum{H}'=
\begin{pmatrix}[0.5,\,1.5] & [-1,1]\\{} [-0.5,\,0.5] & [0,2]\end{pmatrix},
$$
which is not regular as $\rho(\Rad{H})=1.5$. Nevertheless, taking
\begin{align*}
R=\begin{pmatrix}1 & 1\\ 0 & 1\end{pmatrix},\quad
R^{-1}=\begin{pmatrix}1 & -1\\ 0 & 1\end{pmatrix},
\end{align*}
and preconditioning the parametric matrix both from left and from right, we obtain
\begin{align*}
\inum{H}''
=\hull\braces*{RA(p)R^{-1}\mid p\in\inum{p}}
=\begin{pmatrix}1 & 0\\{} [-0.5,\,0.5] & [0.5,1.5]\end{pmatrix},
\end{align*}
which is regular since $\rho\big(({H}'')\Rad{}\big)=0.5$.
\end{example}

This example shows that the concept of strong regularity for parametric matrices should be extended to include double preconditioning. 
\begin{definition}
	A~parametric matrix $A(\inum{p})$ is strongly regular if $A(\Mid{p})$ is nonsingular and the interval hull $\inum{H}$ of the matrix
	\begin{equation}\label{eq:strreg_dbl}
	H(\inum{p})=\left\{LA(p)R\mid p\in\inum{p}\right\}
	\end{equation}
	is regular for some $R$ and $L$ such that $A(\Mid{p})^{-1}=RL$.
\end{definition}
If there are only affine-linear dependencies in \nref{eq:strreg_dbl} then
\begin{equation}\label{eq:b_and_bprime2_lincase}
\inum{H}=I+H^{\Delta}[-1,1]
\end{equation}
where $H^{\Delta}=\sum_{k=1}^K\left|LA^{(k)}R\right|p^{\Delta}$.
The case $L=A(\Mid{p})^{-1}$, $R=I_n$ corresponds to left preconditioning and the case $L=I_n$, $R=A(\Mid{p})^{-1}$ to right preconditioning. However, it is not clear what is the best choice for $L,R$, and it seems to be a~challenging problem. Thus, as a~first step, we consider some simple cases. Even though these simple cases can be solved directly, the idea is to find a~general approach that will work very well even for these cases.

\paragraph{Rank one matrix.}
Consider the parametric matrix in the specific form with one parameter and rank one matrix
\begin{align*}
A(p)=I_n+A^{(1)}p_1,
\end{align*}
where $p_1\in\inum{p}_1=[-1,1]$. The absolute term is the identity matrix, which can be obtained by standard preconditioning by the midpoint inverse. Assume that $A^{(1)}$ has rank one, so it can be written as $A^{(1)}=ab^T$ for some $a,b\in\R^n$. This very special form of $A(p)$ can be handled analytically, but we put it into the standard framework to possibly come up with a~heuristic for the~general case.

The commonly used sufficient condition for testing regularity is $\rho(|ab^T|)<1$, which takes the form $\rho(|ab^T|)=\rho(|a|\cdot|b|^T)=|b|^T|a|<1$. Double preconditioning yields
$$
RA(p)R^{-1}=I_n+(Rab^TR^{-1})p_1,
$$
so the regularity test reads
$$
\rho(|Rab^TR^{-1}|)
=\rho(|Ra|\cdot|b^TR^{-1}|)
=|b^TR^{-1}|\cdot|Ra|<1.
$$
For which $R$ is the left-hand side minimal? First, we derive its lower bound
$$
|b^TR^{-1}|\cdot|Ra|
\geqslant |b^TR^{-1}Ra| =|b^Ta|.
$$
Now, we show that this lower bound is attained for example for $R$ such that $Ra=e_1$:
$$
\rho(|Rab^TR^{-1}|)
=|b^TR^{-1}|\cdot|Ra|
=|b^TR^{-1}Ra|
=|b^Ta|.
$$

\paragraph{Several rank one matrices.}
Can we extend the above idea to more parameters? 
Consider the parametric matrix 
\begin{align*}
A(p)=I_n+\sum_{k=1}^K A^{(k)}p_k,
\end{align*}
where, for each $k=1,\dots,K$, $A^{(k)}=a^{(k)}(b^{(k)})^T$ has rank one. Provided vectors $a^{(1)},\dots,a^{(K)}$ are linearly independent, we can easily find $R$ such that $Ra^{(k)}$ is a~canonical unit vector for each~$k$. The bad news is that the spectral radius is not additive, so this choice of $R$ needn't be optimal. In the following example we test how good is the choice proposed above.

\begin{example}\normalfont
	\label{ex:romatrices_1}
	 Consider random $n\times n$ matrices that are obtained as follows: we draw the elements of the vectors $a^{(k)}$ and $b^{(k)}$, $k=1,\dots,K$, from the intervals $\inum{u}+[-0.3k,0.3k]$ and $\inum{v}+[-0.3k,0.3k]$, respectively, where $\inum{u}=[-0.5,1.0]$ and $\inum{v}=[2.0,2.5]$. Then $A(p)=I_n+\sum_{k=1}^Ka^{(k)}(b^{(k)})^Tp_k$ in the first variant, and $A(p)=I_n+\sum_{k=1}^Ka^{(k)}(a^{(k)})^Tp_k$ in the second variant. The parameters $p_k$, $k=1,\ldots,K$, are assumed to vary within the interval $[-1,1]$. The preconditioning matrix $R$ described above is obtained as follows: if $K=n$, then $R=A^{-1}$, where $A=(a^{(1)},\dots,a^{(n)})$; if $K<n$, we extend $A=(a^{(1)},\dots,a^{(K)})$ to a~square matrix $A'$ by adding some artificial~vectors (linearly independent of $a^{(k)}$ and of each other) and take $R=(A')^{-1}$. This strategy of computing $R$ will be referred to as S0 strategy. In order to find a~possibly better preconditioning matrix we consider the following three other strategies: 
\newcounter{strat}
\setcounter{strat}{1}
\addtolength{\leftmargini}{2ex}
\begin{itemize}
\item[S\thestrat:] Double preconditioning with $R$ and $R^{-1}$, where $R$ is obtained from the spectral decomposition of $A^{(k)}$ having the highest norm.
\stepcounter{strat}
\item[S\thestrat:] Double preconditioning with $R$ and $R^{-1}$, where $R$ is obtained from the spectral decomposition of $A(p)$, where $p\in\inum{p}$ is a~random vector of parameters; we take $R$ which gives the minimum value of $\rho(\Rad{H})$  out of 1000 repetitions.
\stepcounter{strat}
\item[S\thestrat:] Double preconditioning with $R$ and $R^{-1}$, where $R$ is obtained from the spectral decomposition of $A(p)$, where $p\in\inum{p}$ is a~random combination of the endpoints of $\inum{p}$; we take $R$ which gives the minimum value of $\rho(\Rad{H})$  out of 1000 repetitions.
\end{itemize}
Denote $A^{\Delta}:=\sum_{k=1}^K\left|A^{(k)}\right|p^{\Delta}_k$ and $\Rad{H}:=\sum_{k=1}^K\left|RA(p)R^{-1}\right|p^{\Delta}_k$. Fig.~\ref{fig:compar_spradii_DPex1} shows the box plot of the ratio of the spectral radii $\rho(\Rad{A})/\rho(\Rad{H})$ for the first variant.
\begin{figure}
\centering
\includegraphics[width=0.9\textwidth,trim={15pt 15pt 15pt 11pt},clip]{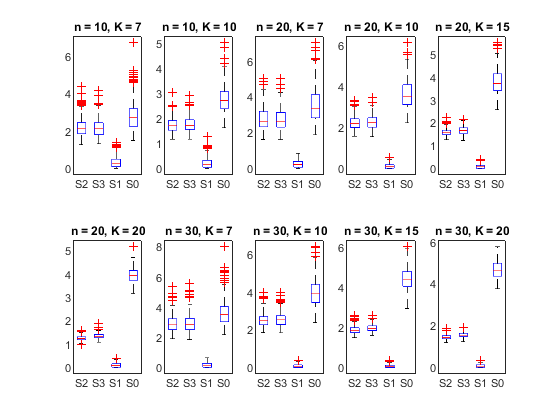}
\caption{Results for Example~\ref{ex:romatrices_1}: boxplot of ratios $\rho(\Rad{A})/\rho(\Rad{H})$ obtained from 500 repetitions for first variant; $\inum{u}=[-0.5,1.0]$, $\inum{v}=[2,2.5]$}
	\label{fig:compar_spradii_DPex1}
\end{figure}
As can be seen from the figure, the S0 strategy is significantly better than other considered strategies; the advantage grows with $n$ and $K$ (notice that the influence of $K$ is even greater). To better illustrate the differences between the considered strategies, we give the geometric means of the ratios $\rho(\Rad{A})/\rho(\Rad{H})$ in Table~\ref{tab:compar_spradii_DPex1}. The results in the table show that the S0 strategy is better also for the second variant.

\begin{table}
\centering
\caption{Results for Example~\ref{ex:romatrices_1}: the geometric means of ratios $\rho(\Rad{A})/\rho(\Rad{H})$ for four considered preconditioning strategies and two variants of $A(p)$; the best results are typed in boldface}
	\begin{tabular}{rr|rrrr|rrrr}
		\hline
		\multicolumn{2}{c|}{Sizes} & \multicolumn{4}{c|}{First variant} &
		\multicolumn{4}{c}{Second variant} \\
		\hline
		\multicolumn{1}{l}{$n$} & \multicolumn{1}{l|}{$K$} & \multicolumn{1}{c}{ S2} & \multicolumn{1}{c}{ S3} & \multicolumn{1}{c}{ S1} & \multicolumn{1}{c|}{ S0} & \multicolumn{1}{c}{ S2} & \multicolumn{1}{c}{ S3} & \multicolumn{1}{c}{ S1} & \multicolumn{1}{c}{ S0} \\
		\hline
		10    & 7     & 2.10  & 2.09  & 0.21  & \textbf{2.63} & 1.65  & 1.54  & 1.30  & \textbf{1.86} \\
		10    & 10    & 1.71  & 1.72  & 0.08  & \textbf{2.72} & 1.65  & 1.55  & 1.30  & \textbf{2.09} \\
		20    & 7     & 2.65  & 2.64  & 0.20  & \textbf{3.27} & 1.98  & 1.77  & 1.34  & \textbf{2.19} \\
		20    & 10    & 2.14  & 2.19  & 0.05  & \textbf{3.38} & 1.98  & 1.79  & 1.32  & \textbf{2.45} \\
		20    & 15    & 1.64  & 1.71  & 0.05  & \textbf{3.76} & 1.91  & 1.73  & 1.30  & \textbf{2.81} \\
		20    & 20    & 1.25  & 1.33  & 0.05  & \textbf{3.80} & 1.77  & 1.64  & 1.27  & \textbf{3.03} \\
		30    & 7     & 2.99  & 2.96  & 0.10  & \textbf{3.61} & 2.13  & 1.89  & 1.35  & \textbf{2.32} \\
		30    & 10    & 2.50  & 2.53  & 0.07  & \textbf{3.95} & 2.23  & 1.97  & 1.33  & \textbf{2.67} \\
		30    & 15    & 1.91  & 1.98  & 0.05  & \textbf{4.47} & 2.19  & 1.91  & 1.29  & \textbf{3.15} \\
		30    & 20    & 1.48  & 1.60  & 0.04  & \textbf{4.76} & 2.04  & 1.80  & 1.28  & \textbf{3.40} \\
		\hline
	\end{tabular}%
	\label{tab:compar_spradii_DPex1}%
\end{table}%
	
\end{example}
%

\paragraph{Higher rank matrices.}
Now consider a~parametric matrix with one parameter
\begin{align*}
A(p)=I_n+A^{(1)}p_1,
\end{align*}
where $A^{(1)}=AB^T$ has rank $r$ and $A,B\in\R^{n\times r}$. Extending the previous ideas, we can think of a~matrix $R$ such that $RA=\binom{I_r}{0}$. This matrix is easy to find, and the regularity condition then reads
\begin{align*}
\rho(|RAB^TR^{-1}|)
&=\rho(|RA|\cdot|B^TR^{-1}|)
=\rho(|B^TR^{-1}|\cdot|RA|)\\
&=\rho(|B^TR^{-1}RA|)
=\rho(|B^TA|).
\end{align*}
Even though this choice of $R$ needn't be optimal for the regularity condition, it seems to be a~promising candidate. The reason is that $\rho(|B^TA|)$ is supposed to be often smaller (since the matrix is smaller) than the value $\rho(|AB^T|)$ resulting from the case when no preconditioning is used.

\paragraph{Yet higher rank matrices.}
We again consider a~parametric matrix with one parameter
\begin{align*}
A(p)=I_n+A^{(1)}p_1,
\end{align*}
and the matrix $A^{(1)}$ has full rank or almost full rank. Matrix $A^{(1)}$ is similar to some simpler form matrix $D$ (e.g., diagonal, Jordan form, or other) by a~similarity transformation $RA^{(1)}R^{-1}=D$. So we consider double preconditioning by $R$ and $R^{-1}$, yielding $RA(p)R^{-1}=I_n+RA^{(1)}R^{-1}p_1=I_n+Dp_1$. 

When $D$ is real diagonal, then $\rho(D)=\rho(|D|)$ and we have the best bound. More realistically,  $A^{(1)}$ has some complex eigenvalues. Then we can consider $D$ to be real block diagonal: the blocks of size one correspond to real eigenvalues and  the blocks of size two correspond to the pair of complex conjugate eigenvalues. Then $\rho(D)\leqslant\rho(|D|)\leqslant \sqrt{2}\rho(D)$ since the worst case is the block of the form $\left(\begin{smallmatrix}1&1\\-1&1\end{smallmatrix}\right)$; see Proposition~\ref{propBlockRotRho} below. Even more, for this matrix, no double preconditioning of the particular blocks improves the bound (Proposition~\ref{propBlockRotPrec}).

\begin{proposition}\label{propBlockRotRho}
For each block $B$ of matrix $D$ we have $\rho(|B|)\leqslant \sqrt{2}\rho(B)$ and the bound is tight for $B=\left(\begin{smallmatrix}1&1\\-1&1\end{smallmatrix}\right)$.
\end{proposition}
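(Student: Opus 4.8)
The plan is to treat the two possible block sizes separately, since $D$ consists only of $1\times 1$ blocks (real eigenvalues) and $2\times 2$ blocks (conjugate pairs of complex eigenvalues). For a $1\times 1$ block $B=(\alpha)$ the claim is immediate: $|B|=(|\alpha|)$, so $\rho(|B|)=|\alpha|=\rho(B)\leqslant\sqrt2\,\rho(B)$. Hence all the work concerns the $2\times 2$ blocks, which in the real block-diagonal form associated with an eigenvalue pair $\alpha\pm i\beta$ can be written as
\[
B=\begin{pmatrix}\alpha & \beta\\ -\beta & \alpha\end{pmatrix}.
\]

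For such a block I would first compute both spectral radii explicitly. The eigenvalues of $B$ are exactly $\alpha\pm i\beta$, so $\rho(B)=\sqrt{\alpha^2+\beta^2}$. The magnitude matrix is $|B|=\left(\begin{smallmatrix}|\alpha| & |\beta|\\ |\beta| & |\alpha|\end{smallmatrix}\right)$, a symmetric nonnegative matrix whose eigenvalues are $|\alpha|\pm|\beta|$; since $|B|\geqslant 0$, its spectral radius is the Perron root $\rho(|B|)=|\alpha|+|\beta|$.

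It then remains to verify the scalar inequality $|\alpha|+|\beta|\leqslant\sqrt2\,\sqrt{\alpha^2+\beta^2}$. Squaring reduces this to $(|\alpha|+|\beta|)^2=\alpha^2+\beta^2+2|\alpha\beta|\leqslant 2(\alpha^2+\beta^2)$, which is precisely the arithmetic--geometric mean inequality $2|\alpha\beta|\leqslant\alpha^2+\beta^2$. Equality holds exactly when $|\alpha|=|\beta|$; taking $\alpha=\beta=1$ gives $B=\left(\begin{smallmatrix}1&1\\-1&1\end{smallmatrix}\right)$ with $\rho(B)=\sqrt2$ and $\rho(|B|)=2=\sqrt2\,\rho(B)$, which confirms tightness.

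The argument is entirely elementary, so I do not anticipate a genuine obstacle. The only points requiring a little care are confirming that passing to $|B|$ replaces the modulus $|\alpha\pm i\beta|$ by the Perron root $|\alpha|+|\beta|$ (so that it is exactly the off-diagonal sign pattern that forces the $\sqrt2$ gap), and pinning down the equality case so as to exhibit the stated extremal block.
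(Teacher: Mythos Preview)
Your proof is correct and follows essentially the same route as the paper: write the $2\times 2$ block as $\left(\begin{smallmatrix}\alpha&\beta\\-\beta&\alpha\end{smallmatrix}\right)$, compute $\rho(B)=\sqrt{\alpha^2+\beta^2}$ and $\rho(|B|)=|\alpha|+|\beta|$, and then verify the resulting scalar inequality. The only cosmetic difference is that the paper normalizes and differentiates $f(s)=(1+s^2)/(1+s)^2$ to locate the extremum, whereas you invoke the AM--GM inequality directly; you also spell out the trivial $1\times 1$ case, which the paper omits.
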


\begin{proof}
Block $B$ has the form of $B=\left(\begin{smallmatrix}c&s\\-s&c\end{smallmatrix}\right)$. Its eigenvalues are $c\pm si$ and its spectral radius $\rho(B)=\sqrt{c^2+s^2}$. For the absolute value of $B$ we have $\rho(|B|)=c+s$. We want to know the minimum of 
$$
f(c,s)=\frac{\rho(B)^2}{\rho(|B|)^2}=\frac{c^2+s^2}{(c+s)^2}.
$$
Since it is invariant to scaling, we normalize it such that $c=1$ (case $c=0$ is trivial). The derivative of function  
$
f(s)=\frac{1+s^2}{(1+s)^2}
$
is 
$f'(s)=\frac{2s-2}{(1+s)^3}$, which is zero for $s=1$. Indeed, this corresponds to the minimum of $f(s)$, so the best bound is attained for $B=\left(\begin{smallmatrix}1&1\\-1&1\end{smallmatrix}\right)$.
\end{proof}

\begin{proposition}\label{propBlockRotPrec}
For each block $B$ of matrix $D$ and each nonsingular preconditioner $R$ we have $\rho(|B|)\leqslant \rho(|RBR^{-1}|)$.
\end{proposition}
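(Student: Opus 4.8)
The plan is to exploit the fact that $M:=RBR^{-1}$ is similar to $B$, so it shares $B$'s trace and determinant, and then to compare $\rho(|M|)$ with $\rho(|B|)$ through the explicit Perron root of a nonnegative $2\times2$ matrix. For a size-one block $B=(c)$ the claim is immediate, since $RBR^{-1}=c$; so assume $B=\left(\begin{smallmatrix}c&s\\-s&c\end{smallmatrix}\right)$. Recall from the proof of Proposition~\ref{propBlockRotRho} that then $\operatorname{tr}B=2c$, $\det B=c^2+s^2$, and $\rho(|B|)=|c|+|s|$. Writing $M=\left(\begin{smallmatrix}a&b\\d&e\end{smallmatrix}\right)$, similarity gives $a+e=2c$ and $ae-bd=c^2+s^2$.

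The key step is to pin down $|b|\,|d|$ using only these two invariants. From $ae=\tfrac14\big((a+e)^2-(a-e)^2\big)=c^2-\tfrac14(a-e)^2$ I would obtain $bd=ae-(c^2+s^2)=-\tfrac14(a-e)^2-s^2\le0$, so that $4|b|\,|d|=(a-e)^2+4s^2\ge 4s^2$. Next I would use the closed form for the spectral radius of the nonnegative matrix $|M|$: since $|M|\ge0$, Perron--Frobenius theory guarantees that its dominant eigenvalue is the larger root of the characteristic polynomial, namely
\[
\rho(|M|)=\tfrac12\big(|a|+|e|\big)+\tfrac12\sqrt{(|a|-|e|)^2+4|b|\,|d|}.
\]

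It then remains to bound the two summands separately. The triangle inequality applied to $a+e=2c$ gives $|a|+|e|\ge|a+e|=2|c|$, so the first term is at least $|c|$. Dropping the nonnegative quantity $(|a|-|e|)^2$ and inserting $4|b|\,|d|\ge4s^2$ shows that the square-root term is at least $\sqrt{4s^2}=2|s|$, so the second term is at least $|s|$. Adding the two bounds yields $\rho(|M|)\ge|c|+|s|=\rho(|B|)$, as claimed. The only point that needs care is the sign computation $bd\le0$, which is exactly what makes $4|b|\,|d|$ reduce to $(a-e)^2+4s^2$ and thereby decouples the two bounds; once that is in hand, verifying that the larger characteristic root is indeed $\rho(|M|)$ is routine for a nonnegative $2\times2$ matrix.
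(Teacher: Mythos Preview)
Your proof is correct. Both your argument and the paper's hinge on the same key observation---that the two off-diagonal entries of $M=RBR^{-1}$ have nonpositive product---but you reach and exploit it differently. The paper normalizes to $B=\left(\begin{smallmatrix}f&1\\-1&f\end{smallmatrix}\right)$ with $f\ge0$ and $\det R=1$, computes $RBR^{-1}$ explicitly (so the off-diagonal entries are visibly $a^2+b^2$ and $-(c^2+d^2)$), introduces an auxiliary matrix $C\le|RBR^{-1}|$ with nonnegative off-diagonals, and derives a contradiction by showing $\det(C-(f-1)I_2)\le0$. You instead read off $bd\le0$ directly from the trace and determinant invariants, then plug into the closed-form Perron root of a $2\times2$ nonnegative matrix and bound the two summands separately via $|a|+|e|\ge|a+e|$ and $4|b||d|\ge4s^2$. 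Your route is more direct: it avoids the normalizations, the auxiliary matrix, and the contradiction, and it handles general $c,s$ without reducing to $s=1$. The paper's explicit computation, on the other hand, makes the structure of $RBR^{-1}$ completely transparent and may be easier to generalize if one later wants sharper estimates on individual entries.
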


\begin{proof}
Without loss of generality assume that block $B$ has the form of $B=\left(\begin{smallmatrix}f&1\\-1&f\end{smallmatrix}\right)$, where $f\geqslant 0$. Consider the preconditioner in the form $R=\left(\begin{smallmatrix}a&b\\c&d\end{smallmatrix}\right)$ normalized such that $ad-bc=1$, that is, its determinant is~1. Then
\begin{align*}
RBR^{-1}
&=\begin{pmatrix}a&b\\c&d\end{pmatrix}
  \begin{pmatrix}f&1\\-1&f\end{pmatrix}
  \begin{pmatrix}d&-b\\-c&a\end{pmatrix}\\
&=\begin{pmatrix}f(ad-bc)-(bd+ac)&a^2+b^2\\-c^2-d^2&f(ad-bc)+(bd+ac)\end{pmatrix}.
\end{align*}
Denote
$$
C:=
\begin{pmatrix}f(ad-bc)-(bd+ac)&a^2+b^2\\c^2+d^2&f(ad-bc)+(bd+ac)\end{pmatrix}.
$$
Then $|C|\leqslant|RBR^{-1}|$, so $\rho(C)\leqslant\rho(|RBR^{-1}|)$. Denote by $\lambda_1\geqslant\lambda_2$ the eigenvalues of~$C$. They are real since the off-diagonal entries of $C$ are nonnegative. The trace of $C$ is $\lambda_1+\lambda_2=2f(ad-bc)=2f$.

Since $\rho(|B|)=f+1$, we suppose to the contrary that $\rho(|RBR^{-1}|)<f+1$. Thus also $\rho(C)<f+1$. Since $\lambda_1<f+1$, we get $\lambda_2>f-1$. Hence $C-(f-1)I_2$ has positive eigenvalues and its determinant must be positive, too. Thus
\begin{align*}
0
&<\det(C-(f-1)I_2)\\
&=\det\begin{pmatrix}(ad-bc)-(bd+ac)&a^2+b^2\\c^2+d^2&(ad-bc)+(bd+ac)\end{pmatrix}\\
&=(ad-bc)^2-(bd+ac)^2 -(a^2+b^2)(c^2+d^2)\\
&=a^2d^2+b^2c^2-2abcd-b^2d^2-a^2c^2-2abcd -a^2c^2-b^2c^2-a^2d^2-b^2d^2\\
&=-2b^2d^2-2a^2c^2-4abcd
=-2(ac+bd)^2
\leqslant 0,
\end{align*}
which is a~contradiction.
\end{proof}

\paragraph{Several higher rank matrices.}
In the general case,
\begin{align*}
A(p)=I_n+\sum_{k=1}^K A^{(k)}p_k.
\end{align*}
The above discussion motivates us to construct the preconditioners such that it diagonalizes the matrix $A^{(k)}$ with highest norm, which corresponds to the S2 strategy. Unfortunately, the next example shows that in the general case the S2 strategy is not as efficient as it was expected. However, it might be useful for symmetric matrices.

\newcounter{strathr}
\setcounter{strathr}{1}

\begin{example}\normalfont
	\label{ex:hrmatrices}
	Consider an interval parametric matrix with $A^{(k)}$, for $k=1,\ldots,K$, of rank $r>1$. Each $A^{(k)}$ is obtained as follows (the procedure is similar to the one used in Example \ref{ex:romatrices_1}): first the elements of full rank matrices $A_k,B_k\in\R^{n\times r}$ are drawn from the intervals $\inum{u}+[-0.3k,0.3k]$ and $\inum{v}+[-0.3k,0.3k]$, respectively, where  $\inum{u}=[-0.5,1.0]$, $\inum{v}=[2.0,2.5]$. Then we put $A(p)=I_n+\sum_{k=1}^KA_kB_k^T$ in the first variant, and $A(p)=I_n+\sum_{k=1}^KA_kA_k^T$ in the second variant.
	The results obtained by using preconditioning strategies S1, S2, S3 for the first variant are presented in Fig.~\ref{fig:compar_spradii_DPex2HR}. Additionally, Table \ref{tab:compar_spradii_DPex2HR} shows the geometric means of the ratios $\rho(\Rad{A})/\rho(\Rad{H})$ for two considered cases. As we can see from the results, the S3 strategy is the best for the first variant, whereas for the second variant, the S2 strategy is slightly better than two other strategies.
	
\begin{figure}
	\centering
\includegraphics[width=0.9\textwidth,trim={15pt 15pt 15pt 11pt},clip]{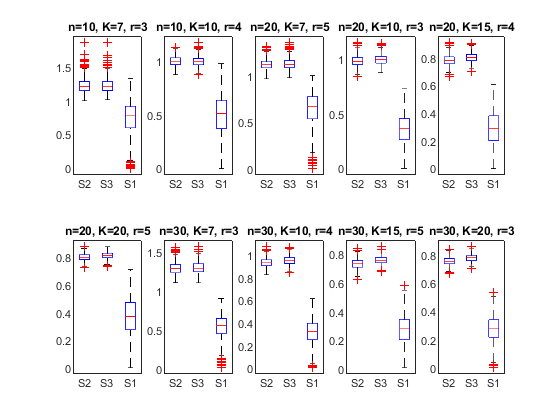}
\caption{Results for Example~\ref{ex:hrmatrices}: boxplot of ratio $\rho(\Rad{A})/\rho(\Rad{H})$ obtained from 500 repetitions for first variant; $\inum{u}=[-0.5,1.0]$, $\inum{v}=[2,2.5]$}
	\label{fig:compar_spradii_DPex2HR}
\end{figure}

\begin{table}
\centering
\caption{Results for Example~\ref{ex:hrmatrices}: the geometric means of ratio $\rho(\Rad{A})/\rho(\Rad{H})$ for considered preconditioning strategies; the best results are typed in boldface}
\begin{tabular}{rrr|rrr|rrr}
\hline
\multicolumn{3}{c|}{Sizes} & \multicolumn{3}{c|}{First variant} & \multicolumn{3}{c}{Second variant} \\
\hline
\multicolumn{1}{l}{$n$} & \multicolumn{1}{l}{$K$} & \multicolumn{1}{l|}{ rank} & \multicolumn{1}{c}{ S2} & \multicolumn{1}{c}{ S3} & \multicolumn{1}{c|}{ S1} & \multicolumn{1}{c}{ S2} & \multicolumn{1}{c}{ S3} & \multicolumn{1}{c}{ S1} \\
\hline
10    & 7     & 3     & \textbf{1.24} & \textbf{1.24} & 0.67  & \textbf{1.22} & 1.19  & 1.16 \\
10    & 10    & 4     & \textbf{1.01} & \textbf{1.01} & 0.45  & \textbf{1.06} & \textbf{1.06} & 1.05 \\
20    & 7     & 5     & \textbf{1.13} & \textbf{1.13} & 0.61  & \textbf{1.20} & 1.16  & 1.14 \\
20    & 10    & 3     & 0.99  & \textbf{1.01} & 0.32  & \textbf{1.11} & 1.09  & 1.07 \\
20    & 15    & 4     & 0.79  & \textbf{0.81} & 0.26  & \textbf{0.99} & 0.98  & 0.98 \\
20    & 20    & 5     & 0.81  & \textbf{0.82} & 0.33  & \textbf{0.97} & 0.96  & 0.96 \\
30    & 7     & 3     & 1.30  & \textbf{1.31} & 0.52  & \textbf{1.37} & 1.28  & 1.18 \\
30    & 10    & 4     & 0.94  & \textbf{0.96} & 0.30  & \textbf{1.10} & 1.07  & 1.05 \\
30    & 15    & 5     & 0.74  & \textbf{0.76} & 0.26  & \textbf{0.99} & 0.98  & 0.98 \\
30    & 20    & 3     & 0.76  & \textbf{0.78} & 0.26  & \textbf{0.96} & 0.95  & 0.95 \\
\hline
\end{tabular}%
\label{tab:compar_spradii_DPex2HR}%
\end{table}%

\end{example}

\paragraph{On performing the double preconditioning.}
Due to sub-distributivity of interval arithmetic, the evaluation 
$$\textstyle
\inum{H}_1:=A^{(0)}+\sum_{k=1}^K (LA^{(k)}R)\inum{p}_k
$$
gives always as tight interval as the evaluation
$$\textstyle
\inum{H}_2:=A^{(0)}+L\left(\sum_{k=1}^K  A^{(k)}\inum{p}_k\right)R.
$$
In contrast to the left preconditioning (Proposition~\ref{propColOnePrecond}), there is no natural class of matrices, for which $\inum{H}_1=\inum{H}_2$.

\begin{proposition}
Suppose that there are some dependencies in $A(p)$, that is, there is $k$ such that $A^{(k)}_{\alpha\beta},A^{(k)}_{\gamma\delta}\not=0$ for some indices $(\alpha,\beta)\not=(\gamma,\delta)$. Then there are $L,R$ such that $\inum{H}_1\not=\inum{H}_2$.
\end{proposition}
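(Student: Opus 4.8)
The plan is to reduce the inequality $\inum{H}_1\neq\inum{H}_2$ to a purely algebraic statement about a single entry. First I would record that the midpoints of $\inum{H}_1$ and $\inum{H}_2$ always coincide: for every $(i,j)$ both equal $A^{(0)}_{ij}+\sum_{k}\Mid{p}_k(LA^{(k)}R)_{ij}$, so the two matrices can differ only in their radii. Writing $\ell^{T}$ for the $i$-th row of $L$ and $r$ for the $j$-th column of $R$, the radius of $(\inum{H}_1)_{ij}$ equals $\sum_k \Rad{p}_k\,|\ell^{T}A^{(k)}r|$, whereas the radius of $(\inum{H}_2)_{ij}$ equals $\sum_k \Rad{p}_k\,|\ell|^{T}|A^{(k)}||r|$; the second evaluation treats each interval entry of $\sum_k A^{(k)}\inum{p}_k$ as independent, which is exactly where subdistributivity discards the shared dependence on $\inum{p}_k$. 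Since $|\ell^{T}A^{(k)}r|\leqslant|\ell|^{T}|A^{(k)}||r|$ term by term, this re-derives $\inum{H}_1\subseteq\inum{H}_2$, and $\inum{H}_1\neq\inum{H}_2$ will follow as soon as I force a strict inequality in one term.

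Thus it suffices to exhibit indices $(i,j)$, the index $k$ from the hypothesis, and nonsingular $L,R$ whose $i$-th row $\ell$ and $j$-th column $r$ satisfy $|\ell^{T}A^{(k)}r|<|\ell|^{T}|A^{(k)}||r|$, i.e. the products $\ell_\alpha A^{(k)}_{\alpha\beta}r_\beta$ do not all carry the same sign. I would build $\ell,r$ supported on at most two coordinates and fix their signs to create exactly one cancellation, splitting according to the positions of the two given nonzero entries $A^{(k)}_{\alpha\beta},A^{(k)}_{\gamma\delta}$. If they share a row ($\alpha=\gamma$), take $\ell=e_\alpha$ and $r=\sgn(A^{(k)}_{\alpha\beta})e_\beta-\sgn(A^{(k)}_{\alpha\delta})e_\delta$, giving $\ell^{T}A^{(k)}r=|A^{(k)}_{\alpha\beta}|-|A^{(k)}_{\alpha\delta}|$ while $|\ell|^{T}|A^{(k)}||r|=|A^{(k)}_{\alpha\beta}|+|A^{(k)}_{\alpha\delta}|$; the symmetric choice (swapping the roles of $\ell$ and $r$) handles a shared column. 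In the remaining case every row and column of $A^{(k)}$ has at most one nonzero entry, so any two nonzeros obey $\alpha\neq\gamma$, $\beta\neq\delta$ and $A^{(k)}_{\alpha\delta}=A^{(k)}_{\gamma\beta}=0$; then $\ell=e_\alpha+e_\gamma$ together with a suitable sign choice for $r$ on $\{\beta,\delta\}$ leaves only the two surviving terms, again with opposite signs. In all cases both nonzero entries contribute, so the inequality is strict.

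Finally I would embed the chosen nonzero vectors $\ell,r$ as the $i$-th row of a nonsingular $L$ and the $j$-th column of a nonsingular $R$ by extending them to bases of $\R^n$ (in the shared-row case even $L=I_n$ works). For this $L,R$ and this $(i,j)$ the $k$-th term of the radius of $(\inum{H}_1)_{ij}$ is strictly smaller than that of $(\inum{H}_2)_{ij}$, while all other terms stay dominated, so $(\inum{H}_1)_{ij}\subsetneq(\inum{H}_2)_{ij}$ and hence $\inum{H}_1\neq\inum{H}_2$. The only step needing care is the generalized-permutation case: one must invoke the sparsity of $A^{(k)}$ to ensure the cross terms $\ell_\alpha A^{(k)}_{\alpha\delta}r_\delta$ and $\ell_\gamma A^{(k)}_{\gamma\beta}r_\beta$ vanish, since otherwise they might restore the magnitude lost to cancellation; this is the main, if minor, obstacle. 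I would also note that the argument uses $\Rad{p}_k>0$, which holds because $\inum{p}_k$ is a genuinely uncertain parameter.
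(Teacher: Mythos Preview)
Your argument is correct and follows the same route as the paper's: compare the $(i,j)$-entries of the two evaluations and choose the entries of $L_{i\cdot}$ and $R_{\cdot j}$ so that a cancellation occurs in $\inum{H}_1$ but not in $\inum{H}_2$; the paper merely writes out the two expressions and declares it ``easy to find $L,R$ such that the former will be a strict subset of the latter'', so your version is considerably more explicit. One small remark: the three-case split is more elaborate than needed, because your worry that cross terms might ``restore the magnitude lost to cancellation'' is unfounded---the strict triangle inequality $\lvert\sum a_m\rvert<\sum\lvert a_m\rvert$ holds as soon as any two nonzero summands have opposite signs, so in the case $\alpha\neq\gamma,\ \beta\neq\delta$ one can simply choose the signs of $\ell_\alpha,\ell_\gamma,r_\beta,r_\delta$ independently to make $\ell_\alpha A^{(k)}_{\alpha\beta}r_\beta$ and $\ell_\gamma A^{(k)}_{\gamma\delta}r_\delta$ disagree, without re-selecting the pair or invoking a generalized-permutation structure for $A^{(k)}$.
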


\begin{proof}
The $(i,j)$-th entry of $\inum{H}_1$ is evaluated based on the expression
$$
( L_{i\alpha} A^{(k)}_{\alpha\beta}R_{\beta j} +
  L_{i\gamma} A^{(k)}_{\beta\delta}R_{\delta j} + \dots)\inum{p}_k +\dots,
$$
whereas the $(i,j)$-th entry of $\inum{H}_2$ is evaluated based on the expression
$$
 L_{i\alpha} A^{(k)}_{\alpha\beta}R_{\beta j} \inum{p}_k +
 L_{i\gamma} A^{(k)}_{\beta\delta}R_{\delta j} \inum{p}_k + \dots
$$
Now, it is easy to find $L,R$ such that the former will be a~strict subset of the latter.
\end{proof}

\paragraph{Preconditioning based on decomposition of the midpoint inverse.} If the midpoint matrix is not an identity matrix and is not singular, then its inverse exists and can be decomposed into a~product of two or more matrices. Then these matrices can be used to perform double preconditioning. We consider the following decomposition methods: 
\begin{itemize}
	\item[$-$] LU decomposition: $(\Mid{A})^{-1}=LU$, where $L$ is a~lower triangular matrix with ones on the diagonal and $U$ is an upper triangular matrix. So we precondition from the left by $U$ and from the right by $L$.
	\item[$-$] SVD decomposition: $(\Mid{A})^{-1}=U\Sigma V^T$, where $U$ and $V$ are $n\times n$ orthogonal matrices, and $\Sigma$ is a~diagonal $n\times n$ matrix with singular values of $(\Mid{A})^{-1}$ on the diagonal. So we precondition from the left by $V^T$ and from the right by $U\Sigma$.
	\item[$-$] QR decomposition: $(\Mid{A})^{-1}=QR$, where $Q$ is orthogonal and $R$ is an upper triangular matrix. So we precondition with from the left by $R$ and from the right by $Q$.
\end{itemize}

Preconditioning employing LU decomposition of the midpoint inverse will be referred to as LU preconditioning, the same concerns two other preconditioning strategies.
\begin{example}\normalfont
	\label{ex:randm_nonidmid}
	Consider random interval parametric matrices with non-identity nonsingular midpoint matrix. They are obtained as follows: the elements of the midpoint matrix $A^{(0)}$ are random numbers generalted uniformly from the interval $[-8,8]$, whereas the elements of the $A^{(k)}$ matrices, for $k=1,\ldots,K$, are obtained in the similar manner as in Example \ref{ex:romatrices_1} (in particular they are all of rank one), but this time we draw elements of the random vectors $a_k$ and $b_k$ from the intervals $\inum{u}+[-0.2k,0.2k]$ and $\inum{v}+[-0.2k,0.2k]$, respectively, where $\inum{u}=[-1,2]$ and $\inum{v}=[2,3]$. We compare decomposition based preconditioning (DBP) strategies to each other and DBP combined with S0 strategy. The box-plot for the ratios $\rho(\Rad{A})/\rho(\Rad{H})$ is presented in Fig.~\ref{fig:compar_spradii_DPex5NM}. Additionally, the table with geometric means of the ratios are given in Table~\ref{tab:compar_spradii_DPex5NM}.
\begin{figure}
	\centering
	\includegraphics[width=0.9\textwidth,trim={15pt 15pt 15pt 11pt},clip]{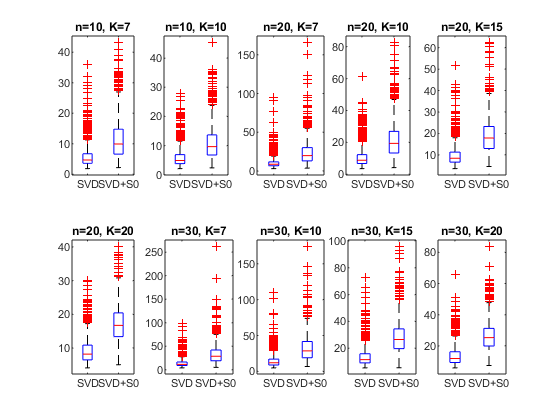}
	\caption{Results for Example~\ref{ex:randm_nonidmid}: boxplot of ratio $\rho(\Rad{A})/\rho(\Rad{H})$; first variant (left), second variant (right); $\inum{u}=[-1,2]$, $\inum{v}=[2,3]$}
	\label{fig:compar_spradii_DPex5NM}
\end{figure}

\begin{table}
	\centering
	\caption{Results for Example~\ref{ex:randm_nonidmid}: the geometric means of $\rho(\Rad{A})/\rho(\Rad{HC})$ for considered preconditioning strategies; the best results are typed in boldface}
	\begin{tabular}{rr|rrrrrr}
		\hline
		\multicolumn{2}{c|}{Sizes} & \multicolumn{6}{c}{First variant} \\
		\hline
		\multicolumn{1}{l}{$n$} & \multicolumn{1}{l|}{$K$} & \multicolumn{1}{c}{ LU} & \multicolumn{1}{c}{ LU+Opt} & \multicolumn{1}{l}{SVD} & \multicolumn{1}{l}{ SVD+Opt} & \multicolumn{1}{l}{QR} & \multicolumn{1}{l}{QR+Opt} \\
		\hline
		10    & 7     & 3.46  & \textbf{9.94} & 5.23  & 9.82  & 4.78  & 10.24 \\
		10    & 10    & 3.52  & \textbf{9.54} & 5.19  & 9.28  & 4.64  & 9.41 \\
		20    & 7     & 5.25  & 19.83 & 9.63  & \textbf{20.19} & 8.36  & 20.08 \\
		20    & 10    & 5.04  & 18.48 & 9.56  & \textbf{19.43} & 7.85  & 18.56 \\
		20    & 15    & 4.86  & \textbf{17.37} & 9.03  & \textbf{17.37} & 7.44  & 16.92 \\
		20    & 20    & 4.64  & 15.95 & 8.71  & \textbf{16.5} & 7.27  & 16.1 \\
		30    & 7     & 6.64  & \textbf{30.01} & 13.45 & 29.48 & 11.44 & 29.21 \\
		30    & 10    & 6.07  & 26.99 & 12.94 & \textbf{27.58} & 11.03 & 27.95 \\
		30    & 15    & 5.95  & 25.77 & 12.51 & \textbf{25.83} & 10.41 & 25.5 \\
		30    & 20    & 5.83  & 23.68 & 12.64 & \textbf{25.06} & 10.19 & 24.24 \\
		\hline
		\multicolumn{2}{c|}{Sizes} & \multicolumn{6}{c}{Second variant} \\
		\hline
		\multicolumn{1}{l}{$n$} & \multicolumn{1}{l|}{$K$} & \multicolumn{1}{l}{ LU} & \multicolumn{1}{l}{ LU+Opt} & \multicolumn{1}{l}{SVD} & \multicolumn{1}{l}{ SVD+Opt} & \multicolumn{1}{l}{QR} & \multicolumn{1}{l}{QR+Opt} \\
		\hline
		10    & 7     & 3.12  & 8.21  & 4.32  & 8.24  & 4.13  & \textbf{8.53} \\
		10    & 10    & 3.16  & \textbf{8.09} & 4.56  & 8.08  & 4.16  & 8.08 \\
		20    & 7     & 4.69  & 15.9  & 7.65  & \textbf{16.14} & 7.03  & 15.91 \\
		20    & 10    & 4.65  & \textbf{16.04} & 7.92  & 15.74 & 7.06  & 15.75 \\
		20    & 15    & 4.54  & 15.32 & 8.23  & \textbf{15.52} & 7.03  & 15.25 \\
		20    & 20    & 4.43  & 15.02 & 8.36  & \textbf{15.49} & 7     & 15.15 \\
		30    & 7     & 5.65  & 24.31 & 10.84 & \textbf{24.53} & 9.92  & 23.6 \\
		30    & 10    & 5.66  & 23.74 & 11.4  & \textbf{23.8} & 10.18 & 23.5 \\
		30    & 15    & 5.50  & 22.47 & 11.79 & \textbf{23.43} & 9.8   & 23.02 \\
		30    & 20    & 5.45  & 22.11 & 12    & \textbf{23.63} & 9.56  & 22.45 \\
		\hline
	\end{tabular}%
	\label{tab:compar_spradii_DPex5NM}%
\end{table}%

As can be seen from the figure and the table, the combination of DBP with S0 strategy for matrices with identity midpoint matrix  significantly decreases the spectral radius. It can be seen as well that the SVD preconditioning seems to be prevailing.
\end{example}

\section{Numerical experiments}\label{sec:numex}
The following examples illustrate the performance of the above proposed preconditioning approaches in the context of solving interval parametric linear systems. All the computation presented below were performed by using authors' own software. The software was implemented in C++ and compiled under Windows 10 using Visual C++ 2017 compiler.

Generally, any method for solving interval parametric linear systems can be adapted to use left, right or double preconditioning. However, based on the results from \cite{HLADIK20191,SkaHla2019NLAA}, we decided to use the Parametric Krawczyk iteration (PKI) with residual correction, which is one of the best methods for solving interval parametric linear systems. For selected examples, we present also the results of the Parametric Hansen-Bliek-Rohn (PHBR) method without residual correction (cf. \cite{Hladik:2012:EFS,HLADIK20191,Skalna:2018:PIAS}), which is a~direct method and which sometimes outperforms PKI. In order to indicate which preconditioning was employed, we add the respective subscript (L -- left preconditioning, R -- right preconditioning, LU -- double LU preconditioning, etc.) to the name of the method.

Given the interval-affine linear system \nref{eq:affintlinsys}, the general scheme of Krawczyk-type iterations (cf.~\cite{HLADIK20191}) can be written as:
\begin{align}\label{iterKraw}
\inum{v}(e)\mapsto \inum{g}(e)+(I-\inum{H}(e))\inum{v}(e),
\end{align}
where $\inum{H}(e)$ and $\inum{g}(e)$ are obtained by a~respective transformation (residual correction and/or left/right/double preconditioning) of \nref{eq:affintlinsys}. The PKI produces a~$p$-solution of the form $\inum{v}(e)=Fe+\inum{a}$. If the right or double preconditioning is involved, then the final solution is $\inum{x}(e)=RFe+R\inum{a}$. Thanks to the fact that $\inum{v}(e)$ partially preserves information about dependencies, the resulting bounds are relatively narrow. 

In order to measure the overestimation of enclosure $\inum{y}$ over $\inum{x}$, where $\inum{x},\inum{y}\in\mathbb{IR}$,  $\inum{x}\subseteq\inum{y}$, we use the following accuracy measure
\begin{equation}
\label{eq:measure1}
O_{\omega}=\left(1-\Rad{x}/\Rad{y}\right)\cdot 100\%.
\end{equation}

\begin{example}\normalfont
\label{ex:ex1}
Consider the following two-dimensional interval parametric linear system with one parameter:
\begin{equation}
\label{eq:ex1_org}
\begin{pmatrix}
p & 2p \\
2 & 1
\end{pmatrix}
\begin{pmatrix}
x_1 \\
x_2
\end{pmatrix}=
\begin{pmatrix}
1 \\
1
\end{pmatrix},\;p\in[1/2,7/2].
\end{equation}
The parametric solution set of the above system as well as the functions $x_1(p)$ and $x_2(p)$ are depicted in Fig.~\ref{fig:fzznum}.

It is not hard to verify that $\rho\left(\Rad{B}\right)=1.25>1$ and $\rho(B'\Rad{})=0.75<1$. This means that $A(\inum{p})$ is regular and thus the parametric solution set is bounded. However, as already mentioned, most of the existing methods for solving interval parametric linear systems will fail to solve the system since they require that $\rho(\Rad{B})<1$. Using our $p$-solution based approach we are able to solve the system \nref{eq:ex1_org} and obtain relatively narrow enclosures (see Table~\ref{tab:ex1tab}).

\begin{figure}
\centering
\begin{tikzpicture}[scale=1.0] 
\begin{axis}[
scale=0.43,
xmin=0.5, xmax=3.5,
font=\scriptsize,
xlabel=$p$,
ylabel={$x_1(p)$},samples=100,
xtick={0.5,3.5}
]
\addplot[no marks] {(1-2*x)/(-3*x)};
\end{axis}
\begin{scope}[xshift=4cm]
\begin{axis}[
scale=0.43,
xmin=0.5, xmax=3.5,
font=\scriptsize,
xlabel=$p$,
ylabel={$x_2(p)$},samples=100,
xtick={0.5,3.5}
]
\addplot[no marks] {(x-2)/(-3*x)};
\end{axis}
\end{scope}
\begin{scope}[xshift=8cm]
\begin{axis}
[
scale=0.43,
xmin=0,   xmax=0.6,
ymin=-0.15,   ymax=1,
xlabel=$x_1(p)$,ylabel=$x_2(p)$,
font=\scriptsize
]
\addplot+[black,solid,no marks,domain=0.5:3.5] ({(2*\x-1)/(3*\x)},{(2-\x)/(3*\x))});
\end{axis}
\node at (1.2,0.7) {$S(p)$};
\end{scope}
\end{tikzpicture}
\caption{The elements of the solution set of \nref{eq:ex1_org} as functions of parameter $p$ (since $x_1$ and $x_2$ are monotone with respect to $p$, their extremal values are attained at respective endpoints of $\inum{p}$) and the solution set of system~\nref{eq:ex1_org}; Example~\ref{ex:ex1}}
\label{fig:fzznum}
\end{figure}
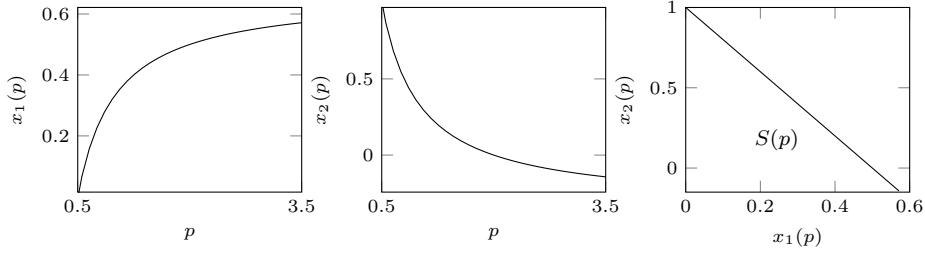

\begin{table}
\centering
\caption{Results for Example~\ref{ex:ex1}: comparison of interval enclosures produced by PHBR$_\textrm{R}$ and PKI$_\textrm{R}$; the third row shows an inner estimation of the inteval hull produced by PKI$_\textrm{R}$}
\label{tab:ex1tab}
\begin{tabular}{l|rr}
\hline
Method & $x_1$ & $x_2$ \\
\hline
PHBR$_\textrm{R}$ & $[-9.992007\cdot 10^{-16},0.571429]$ & $[-0.142857,1]$ \\
PKI$_\textrm{R}$ & $[-1.693090\cdot 10^{-15},0.740747]$ & $[-0.481481,1]$ \\
PKI$_\textrm{R}$ inner & $[0.296296, 0.444444]$ & $[0.111111, 0.407407]$ \\
hull & $[0,12/21]$ & $[-3/21,1]$ \\
\hline
\end{tabular}
\end{table}

As we can see from the table, the PHBR$_\textrm{R}$ method with right preconditioning produced narrower bounds than PKI$_\textrm{R}$, in fact it produced the hull of the parametric solution set. However, the PKI$_\textrm{R}$ produced additionally an inner estimation (i.e., a~subset) of the interval hull of the solution set. Notice that an inner estimation is useful for the comparison purposes, i.e., whenever the hull is not known, it enables us to judge the quality of outer bounds.

\end{example}

\begin{example}\normalfont
	\label{ex:ex2}
	Consider the following three-dimensional interval parametric linear system with three parameters:
	\begin{equation}
	\begin{pmatrix}
		1 + p_1 - p_2 & -p_1 + p_2 & 1 + p_1 \\
		2 + p_1 + p_2 & -1 - p_1 - p_2 + p_3 & -1 + p_1 - 2p_3 \\
		1 + p_1 & -3 - p_1 - 2p_3 & 6 + p_1 + 4p_3
	\end{pmatrix}x=
	\begin{pmatrix}
		1\\
		1\\
		1
	\end{pmatrix},\ 
	\begin{array}{ll}
		p_1 \in [-\delta,\delta] \cr
		p_2 \in [-\delta,\delta] \cr
		p_3 \in [-\delta,\delta]
	\end{array}
	\end{equation}
The values of $\rho(\Rad{H})$ (see formula \nref{eq:b_and_bprime2_lincase}) for $\delta$ ranging from $0.01$ to $0.6$ and for $\inum{H}$ obtained using, respectively, left ($L=(\Mid{A})^{-1}$, $R=I_n$), right ($L=I_n$, $R=(\Mid{A})^{-1}$), and double preconditioning ($RL=(\Mid{A})^{-1}$) are presented in Fig.~\ref{fig:ex2_spectr_radii}. As can be seen from the figure, double LU preconditioning produced the smallest spectral radius. In particular, using double LU preconditioning, the problem can be solved for $\delta\leqslant 0.55$, whereas the classical methods (that use left preconditioning) will fail for $\delta\geqslant 0.27$.
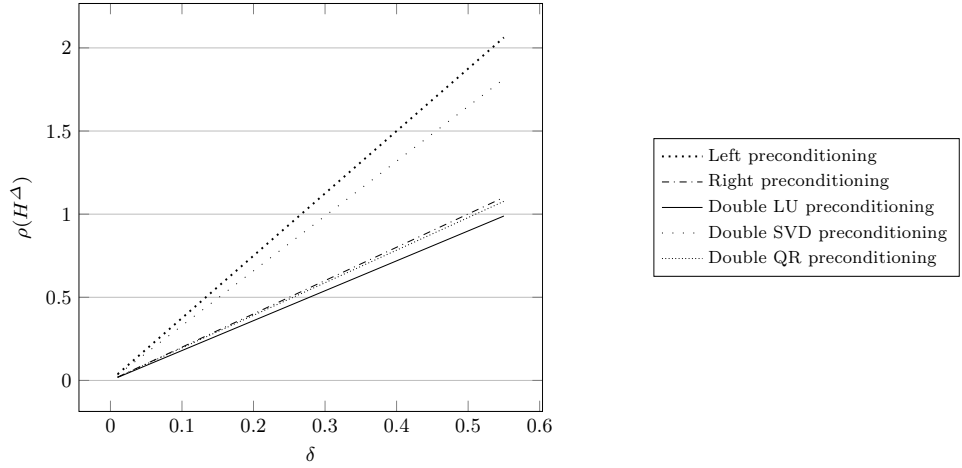
\begin{figure}
	\centering
	\begin{tikzpicture}[xscale=0.89,yscale=0.95,
	axis/.style={very thick, ->, >=stealth'},
	important line/.style={thick},
	dashed line/.style={dashed, thick},
	every node/.style={color=black,}
	]
	\begin{axis}[
	ylabel=$\rho(\Rad{H})$,
	xlabel={$\delta$},
	ymajorgrids=true,
	legend style={at={(1.9,0.5)},anchor=east},
	legend style={font=\scriptsize},
	legend cell align={left}
	]
	\addplot[thick,dotted,smooth] coordinates {(0.01,0.0375012) (0.55,2.06256)};
	\addplot[dashdotted,smooth] coordinates {(0.01,0.0200213) (0.55,1.10117)};
	\addplot[solid,smooth] coordinates {(0.01,0.0179797) (0.55,0.988881)};
	\addplot[loosely dotted,smooth] coordinates {(0.01,0.0329613) (0.55,1.81287)};
	\addplot[densely dotted,smooth] coordinates {(0.01,0.0195877) (0.55,1.07732)};
	\addlegendentry{Left preconditioning}
	\addlegendentry{Right preconditioning}
	\addlegendentry{Double LU preconditioning}
	\addlegendentry{Double SVD preconditioning}
	\addlegendentry{Double QR preconditioning}
	\end{axis}
	\end{tikzpicture}
	\caption{Results for Example~\ref{ex:ex2}: comparison of $\rho\left(\Rad{H}\right)$ as a~function of~$\delta$ with $\inum{H}$ obtained by using various preconditioning approaches}
	\label{fig:ex2_spectr_radii}
\end{figure}

However, the results are not straightforward (see Table~\ref{tab:ex2_overerst}), the PKI$_{\textrm{LU}}$ produced the best bounds only for $x_1$, whereas the PKI$_{\textrm{QR}}$ produced the best bounds for the remaining entries. The PKI$_L$ produced the worse bounds. Table~\ref{tab:ex2_overerst} reports the overestimation of the hull computed by using the formula \nref{eq:measure1}.
\end{example}

\begin{table}
	\centering
	\caption{Results for Example~\ref{ex:ex2}: overestimation of the interval hull  produced by Parametric Krawczyk iteration with various preconditioning approaches}
	\label{tab:ex2_overerst}
	\begin{tabular}{l|rrrr}
		\hline
	Method & $\delta=5\%$ & $\delta = 10\%$ & $\delta=20\%$ \\
	\hline
	\multirow{3}{*}{PKI$_\textrm{L}$} & 11\% & 23\% & 51\% \\
	& 10\% & 21\% & 48\% \\
	& 13\% & 27\% & 55\% \\
	\hline
	\multirow{3}{*}{PKI$_\textrm{R}$} & 6\% & 12\% & 26\% \\
	& 8\% & 17\% & 34\% \\
	& 7\% & 15\% & 31\% \\
	\hline
	\multirow{3}{*}{PKI$_\textrm{LU}$} & \textbf{3\%} & \textbf{7\%} & \textbf{16\%} \\
	& 9\% & 18\% & 35\% \\
	& 10\% & 19\% & 37\% \\
	\hline
	\multirow{3}{*}{PKI$_\textrm{SVD}$} & 6\% & 12\% & 26\% \\
	& 7\% & 14\% & 29\% \\
	& 8\% & 15\% & 31\% \\
	\hline
	\multirow{3}{*}{PKI$_\textrm{QR}$} & 6\% & 12\% & 26\% \\
	& \textbf{6\%} & \textbf{12\%} & \textbf{26\%} \\
	& \textbf{7\%} & \textbf{14\%} & \textbf{29\%} \\
	\hline
	\end{tabular}
\end{table}

\begin{example}\normalfont
\label{ex:ex3}
Consider the following three-dimensional interval parametric linear system with three parameters (cf.~\cite{Pop2018a}):
\begin{figure}
\centering
\includegraphics[width=0.49\textwidth]{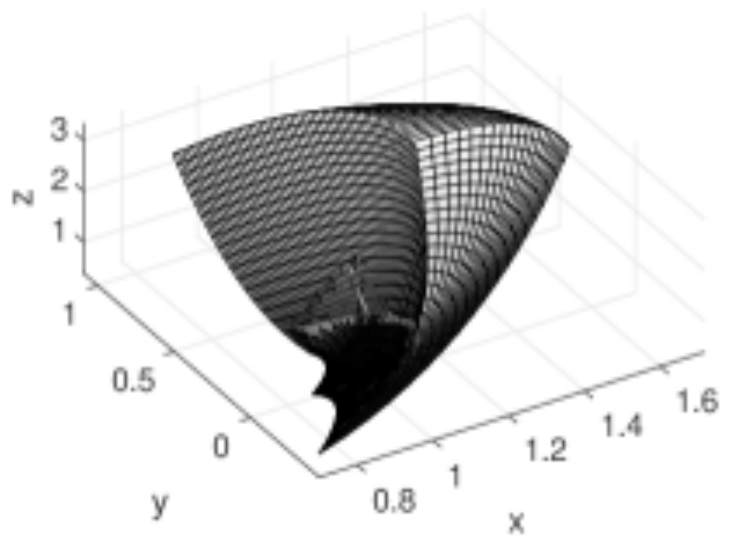}
\includegraphics[width=0.49\textwidth]{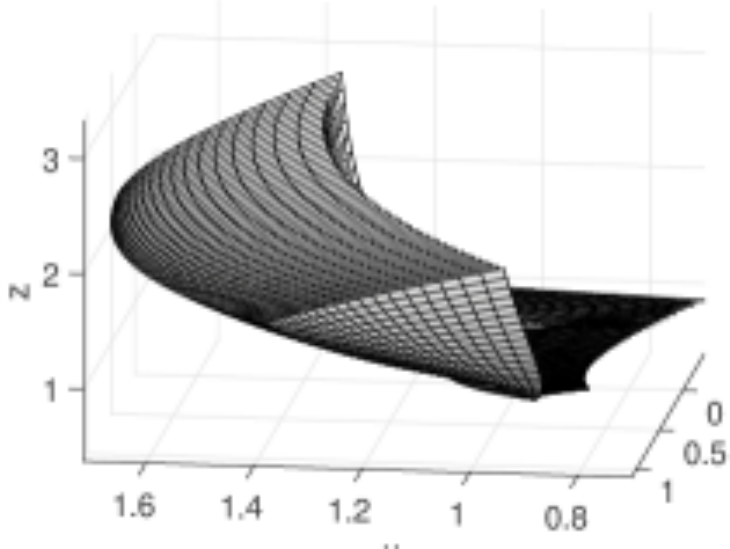}
\caption{Solution set of system (\ref{eq:sys2}) viewed from different perspectives; Example~\ref{ex:ex3}}
\label{fig:exnum2solset}
\end{figure}
\begin{equation}
\label{eq:sys2}
\begin{pmatrix}
\frac{1}{2}-p_2 & p_1 & p_1 \\
p_2 & -p_2 & p_3 \\
p_1 & p_3 & 1
\end{pmatrix}
x=
\begin{pmatrix}
p_2 \\
2p_2 \\
3p_2
\end{pmatrix},\;
\begin{array}{l}
p_1 \in [\frac{3}{4},\frac{5}{4}] \\[1pt]
p_2 \in [\frac{1}{2},\frac{3}{2}] \\[1pt]
p_3 \in [\frac{1}{2},\frac{3}{2}]
\end{array}
\end{equation}
In this case, left preconditioning (with $L=(\Mid{A})^{-1}$ and $R=I_n$ in~\nref{eq:strreg_dbl}) gives $\rho(\Rad{H})\approx 1.12$, which means that most of the existing methods will fail to solve the system (\ref{eq:sys2}) (since they rely on left preconditioning).
On the other hand, right preconditioning (we put $L=I_n$ and $R=(\Mid{A})^{-1}$ in~\nref{eq:strreg_dbl}) yields
$\rho(\Rad{H})\approx 0.97<1$. So, the proposed here approach, which employs right preconditioning, is applicable. The obtained results together with the result from \cite{Pop2018a} are presented in Table \ref{tab:ex2tab}. As can be seen from the table, the result from \cite{Pop2018a} is quite rough (cf. the solution set of the system (\ref{eq:sys2}) depicted in Fig.~\ref{fig:exnum2solset}), but was so far the only solution available for the considered system. Using the PKI$_\textrm{R}$ method we have obtained the solution, which is still quite rough, but is significantly better than the enclosure from \cite{Pop2018a}. As can be seen, the latter overestimates the obtained here enclosure by $(58\%,58\%,59\%)$.
\begin{table}
\centering
\caption{Results for Example~\ref{ex:ex3}: interval enclosures for parametric solution set of system \nref{eq:sys2}}
\label{tab:ex2tab}
\begin{tabular}{rrr}
\hline
PKI$_\textrm{R}$ & \cite{Pop2018a} & interval hull \\
\hline
$[-16.768697, 18.556510]$ & $[-41.11159,43.77826]$ & $[0.69999,1.7157]$ \\
$[-18.197915, 18.535419]$ & $[-43.11161,44.11161]$ & $[-0.4501,1.0938]$ \\
$[-20.214964, 23.743957]$ & $[-51.88948,54.22282]$ & $[0.3818,3.3244]$ \\
\hline
\end{tabular}
\end{table}
\end{example}

\begin{example}\normalfont
\label{ex:ex5}
Consider the following interval parametric linear system (cf.~\cite{Okumura:1993:AIO})
\begin{equation}
\begin{pmatrix}
p_1 + p_6 & -p_6 & 0 & 0 & 0 \\
-p_6 & p_2 + p_6 + p_7 & -p_7 & 0 & 0 \\
0 & -p_7 & p_3 + p_7 + p_8 & -p_8 & 0 \\
0 & 0 & -p_8 & p_4 + p_8 + p_9 & -p_9 \\
0 & 0 & 0 & -p_9 & p_5 + p_9
\end{pmatrix}
x=
\begin{pmatrix}
10 \\
0 \\
10 \\
0 \\
0
\end{pmatrix}.
\end{equation}
The nominal values of all parameters are equal to 1. We solve the system with parameter tolerances 10\%, 20\% and 30\%. The PKI$_\textrm{L}$ and PKI$_\textrm{LU}$ produced the best results. The overestimation of PKI$_\textrm{L}$ enclosures over PKI$_\textrm{LU}$ enclosures are reported in Table~\ref{tab:exmh_results}. As can be seen from the table, for 10\% tolerance, the double LU preconditioning improved the bounds for $x_1$--$x_3$, whereas the bounds for $x_4$ and $x_5$ got worse. For 20\% and 30\% tolerance, the double LU preconditioning improved all bounds, for 30\% tolerance the improvement is quite large.

\begin{table}
	\centering
	\caption{Results for Example~\ref{ex:ex5}: overestimation of PKI$_\textrm{L}$ over PKI$_\textrm{LU}$ enclosures}
	\begin{tabular}{c|rrr}
		\hline
		$x$  & $\delta=$10\% &$\delta=$ 20\% & $\delta=$30\% \\
		\hline
		$x_1$ & 3.6\% & 9.0\% & 19.6\%  \\
		$x_2$ & 1.7\% & 5.7\% & 15.2\%  \\
		$x_3$ & 0.3\% & 2.7\% & 12.1\%  \\
		$x_4$ & -0.4\% & 2.6\% & 12.8\%  \\
		$x_5$ & -1.5\% & 0.6\% & 10.4\%  \\
		\hline
	\end{tabular}%
	\label{tab:exmh_results}%
\end{table}
\end{example}

\begin{example}\normalfont
	\label{ex:electrical_circuit_cmplx}
	Consider interval parametric linear system (\ref{eq:circuit_ac}), which occurs in worst-case tolerance analysis of linear AC (alternate current) electrical circuits~\cite{Kolev:1993:IMCA,Kolev:2002:WCA,Zimmer:2012:SVSPLS}. 
	The circuit studied is shown in Fig.~\ref{fig:lin_dc_circuit} (cf.\ Kolev~\cite{Kolev:1993:IMCA}). It has eleven branches and five nodes (not including the datum node). The parameters of the model have the following nominal values:
\begin{flalign*}
	& e_1=e_2=100V,\,e_5=e_7=10V, \\
	& Z_j=R_j+iX_j\in\mathbb{C},\,R_j=100\Omega,\,X_j=\omega L_j-\frac{1}{\omega C_j},\,j=1,\ldots,11, \\
	& \omega=50,\,X_{1,2,5,7}=\omega L_{1,2,5,7}=20, X_3=\omega L_3=30, \\
	& X_4 =-\frac{1}{\omega C_4}=-300,\,X_{10}=-\frac{1}{\omega C_{10}}=-400,\,X_{6,8,9,11}=0.
\end{flalign*}
The electric parameters resistance $R_j$, inductance $L_j$, and capacitance $C_j$, $j=1,\ldots,11$, of the branch elements are considered to be unknown but vary within given intervals. The amplitudes $e_1,e_2,e_5,e_7$ of the sine voltages are assumed to have zero tolerances. The goal here is to find bounds for the real and imaginary parts of the node voltages $V_1,\ldots,V_5$.
	
	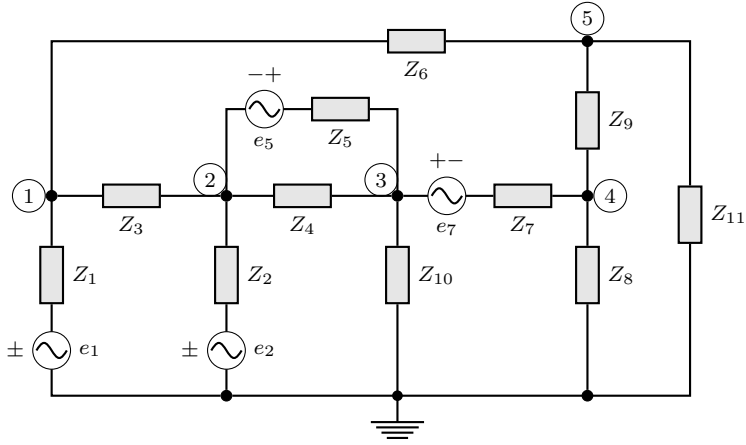
\begin{figure}
		\centering
		\begin{tikzpicture}[
		mynode/.style={circle,minimum width=4pt,fill=white,draw,inner sep=2}, 
		mypoint/.style={circle,minimum width=4pt,fill=black,draw,inner sep=0}, 
		blockh/.style={draw,thick,fill=gray!20,rectangle,minimum height=1em,minimum width=2.5em},
		blockv/.style={draw,thick,fill=gray!20,rectangle,minimum width=1em,minimum height=2.5em}
	]
	\node[blockh,label=below:$Z_6$] (z6) at (4.5,4.8) {};
	\node[blockh,label=below:$Z_5$] (z5) at (3.5,3.9) {};
	\node[blockh,label=below:$Z_3$] (z3) at (0.75,2.75) {};
	\node[blockh,label=below:$Z_4$] (z4) at (3,2.75) {};
	\node[blockh,label=below:$Z_7$] (z7) at (5.9,2.75) {};
	\node[blockv,label=right:$Z_1$] (z1) at (-0.3,1.7) {};
	\node[blockv,label=right:$Z_2$] (z2) at (2,1.7) {};
	\node[blockv,label=right:$Z_{10}$] (z10) at (4.25,1.7) {};
	\node[blockv,label=right:$Z_8$] (z8) at (6.75,1.7) {};
	\node[blockv,label=right:$Z_9$] (z9) at (6.75,3.75) {};
	\node[blockv,label=right:$Z_{11}$] (z11) at (8.1,2.5) {};
	\draw[thick] (4.25,0.1) node[ground] (gr0) {} node[circ](4.5,0.1) {};
	\node[mypoint] (p5) at (6.75,4.8) {};
	\node[mypoint] (gr1) at (6.75,0.1) {};
	\node[mypoint] (gr2) at (2,0.1) {};
	\node[mypoint] (p1) at (-0.3,2.75) {};
	\node[mypoint] (p2) at (2,2.75) {};
	\node[mypoint] (p3) at (4.25,2.75) {};
	\node[mypoint] (p4) at (6.75,2.75) {};
	\node[mynode,above=0cm of p5] {\footnotesize{5}};
	\node[mynode,left=0cm of p1] {\footnotesize{1}};
	\node[mynode,right=0cm of p4] {\footnotesize{4}};
	\node[mynode,above left=0cm and 0cm of p2] {\footnotesize{2}};
	\node[mynode,above left=0cm and 0cm of p3] {\footnotesize{3}};
	\draw[thick] (z1) |- (z6) -| (z11) |- (gr2) -| (z1);
	\draw[thick] (p1) -- (z3) -- (p2) |- (z5) -| (p3) -- (z7) -| (z9) -- (p5);
	\draw[thick] (gr2) -- (z2) -- (p2) -- (z4) -- (p3) -- (z10) -- (gr0);
	\draw[thick] (p4) -- (z8) -- (gr1);
	\node[mynode,label=left:$\pm$,label=right:$e_1$,inner sep=5pt] (e1) at (-0.3,0.7) {};
	\draw[thick] (-0.5,0.7) sin (-0.4,0.8) cos (-0.3,0.7) sin (-0.2,0.6) cos (-0.1,0.7);
	\node[mynode,label=left:$\pm$,label=right:$e_2$,inner sep=5pt] (e2) at (2,0.7) {};
	\draw[thick] (1.8,0.7) sin (1.9,0.8) cos (2,0.7) sin (2.1,0.6) cos (2.2,0.7);
	\node[mynode,label=above:$+-$,label=below:$e_7$,inner sep=5pt] (e7) at (4.9,2.75) {};
	\draw[thick] (4.7,2.75) sin (4.8,2.85) cos (4.9,2.75) sin (5.0,2.65) cos (5.1,2.75);
	\node[mynode,label=above:$-+$,label=below:$e_5$,inner sep=5pt] (e5) at (2.5,3.9) {};
	\draw[thick] (2.3,3.9) sin (2.4,4) cos (2.5,3.9) sin (2.6,3.8) cos (2.7,3.9);
\end{tikzpicture}
		\caption{Linear electrical AC circuit with five nodes and eleven branches; Example~\ref{ex:electrical_circuit_cmplx}}
		\label{fig:lin_dc_circuit}
	\end{figure}
	
	The nodal analysis of the considered circuit leads to the following complex parametric linear system~\cite{Kolev:2002:WCA,PopKolKram:2010:SCVPLS}:
	\begin{flalign}
	\label{eq:circuit_ac}
	& \left(
	\begin{array}{ccc}
	\frac{1}{Z_1}+\frac{1}{Z_3}+\frac{1}{Z_6} & -\frac{1}{Z_3} & 0 \\[2pt]
	-\frac{1}{Z_3} & \frac{1}{Z_2}+\frac{1}{Z_3}+\frac{1}{Z_4}+\frac{1}{Z_5} & -\frac{1}{Z_4}-\frac{1}{Z_5} \\[2pt]
	0 & -\frac{1}{Z_4}-\frac{1}{Z_5} & \frac{1}{Z_4}+\frac{1}{Z_5}+\frac{1}{Z_7}+
	\frac{1}{Z_{10}} \\[2pt]
	0 & 0 & -\frac{1}{Z_7} \\[2pt]
	-\frac{1}{Z_6} & 0 & 0
	\end{array}
	\right. \\
	& \quad\quad\quad\quad\quad\quad\quad\quad\quad\left.
	\begin{array}{cc}
	0 & -\frac{1}{Z_6} \\[2pt]
	0 & 0 \\[2pt]
	-\frac{1}{Z_7} & 0 \\[2pt]
	\frac{1}{Z_7}+\frac{1}{Z_8}+\frac{1}{Z_9} & -\frac{1}{Z_9} \\[2pt]
	-\frac{1}{Z_9} & \frac{1}{Z_6}+\frac{1}{Z_9}+\frac{1}{Z_{11}}
	\end{array}
	\right)
	\begin{pmatrix}
	V_1 \\[2pt]
	V_2 \\[2pt]
	V_3 \\[2pt]
	V_4 \\[2pt]
	V_5
	\end{pmatrix}
	=
	\begin{pmatrix}
	\frac{e_1}{Z_1} \\[2pt]
	\frac{e_2}{Z_2}-\frac{e_5}{Z_5} \\[2pt]
	\frac{e_5}{Z_5}+\frac{e_7}{Z_7} \\[2pt]
	-\frac{e_7}{Z_7} \\[2pt]
	0
	\end{pmatrix}
	\nonumber
	\end{flalign}
	
	Without loss of generality we change the system parameters and substitute $p_j=1/Z_j$, $j=1,\ldots,11$. This way the parametric system involves affine-linear dependencies in the matrix. The system \nref{eq:circuit_ac} is then replaced with an equivalent twice larger real parametric linear system with $18$ real parameters~\cite{Hladk2010SSO,Skalna:2018:PIAS}.
	%
	%
	We solve the latter system with parameter tolerances $\pm 5\%$, $\pm 10\%$, $20\%$, and $25\%$ by using the PKI method with DBP. The best results were produced by the PKI$_\textrm{L}$ (standard approach) and PKI$_\textrm{LU}$ methods, and thus their results are compared in terms of accuracy. Table~\ref{tab:ac_circ_iagsi} shows the overestimation of the PKI$_\textrm{L}$ over PKI$_\textrm{LU}$ enclosure by means of formula~\nref{eq:measure1}.
	
	\begin{table}
		\centering
		\caption{Results for Example~\ref{ex:electrical_circuit_cmplx}: overestimation of PKI$_\textrm{L}$ over PKI$_\textrm{LU}$ enclosure}
		\begin{tabular}{r|rr|rr|rr|rr}
			\hline
			\multirow{2}{*}{Voltage} & \multicolumn{2}{c|}{5\%} & \multicolumn{2}{c|}{10\%} & \multicolumn{2}{c|}{20\%} & \multicolumn{2}{c}{25\%} \\
			\cline{2-9}
			& Re & Im & Re & Im & Re & Im & Re & Im \\
			\hline
			$V_1$ & 3\% & 2\% & 7\% & 6\% & 26\% & 25\% & 58\% & 58\% \\
			$V_2$ & 2\% & 1\% & 5\% & 3\% & 25\% & 21\% & 58\% & 56\% \\
			$V_3$ & 3\% & 1\% & 8\% & 3\% & 29\% & 22\% & 60\% & 57\% \\
			$V_4$ & 0\% & -4\% & 0\% & -6\% & 14\% & 8\% & 48\% & 47\% \\
			$V_5$ & -2\% & -6\% & -2\% & -9\% & 8\% & 1\% & 41\% & 42\% \\
			\hline
			\#iter$_\textrm{L}$ & \multicolumn{2}{c|}{4} & \multicolumn{2}{c|}{5} & \multicolumn{2}{c|}{13} & \multicolumn{2}{c}{36} \\
			\hline
			\#iter$_\textrm{LU}$ & \multicolumn{2}{c|}{4} & \multicolumn{2}{c|}{5} & \multicolumn{2}{c|}{9} & \multicolumn{2}{c}{16} \\
\hline
		\end{tabular}%
		\label{tab:ac_circ_iagsi}%
	\end{table}%
	For $5\%$ and $10\%$ tolerances, the use of the double LU preconditioning improved the bounds for $V_1$--$V_3$ voltages, but the bounds for $V_4$ and $V_5$ voltages got worse. For $20\%$ and $25\%$ tolerance, the use of the double LU preconditioning improved all bounds, in particular for $V_1$--$V_3$ voltages the improvement was quite large. Moreover, for the two largest tolerances, the PKI$_\textrm{LU}$ was less time consuming (it converged much faster).
\end{example}

\begin{example}\normalfont
	\label{ex:frame}
	Consider a~simple one-bay structural steel frame, shown in Fig.~\ref{fig:frame}, which was initially analyzed by Corliss \textit{et al.}~\cite{Corliss:2007:FRASF}. By applying conventional methods for frame structures analysis, the following parametric linear system is obtained~\cite{Corliss:2007:FRASF,Popova:2007:SLSW}.
	
	\begin{figure}
		\centering
		\begin{tikzpicture}[
	blockh/.style={draw=gray!50,thick,fill=gray!50,rectangle,minimum height=0.7em,minimum width=2.5em},
	mypoint/.style={circle,minimum width=1pt,fill=black,draw,inner sep=0}, 
	]
	\draw[very thick] (0,0) -- (0,3.5);
	\draw[very thick] (6,0) -- (6,3.5);
	\node[blockh] (s1) at (0,0) {};
	\node[blockh] (s2) at (6,0) {};
	\draw[very thick] (s1.north west) -- (s1.north east);
	\draw[very thick] (s2.north west) -- (s2.north east);
	\node[mypoint] (p0) at (0,3.5) {};
	\node[mypoint] (p1) at (0.5,3.5) {};
	\node[mypoint] (p1a) at (0.2,3.5) {};
	\node[mypoint] (p2) at (1,3.5) {};
	\node[mypoint] (p3) at (5,3.5) {};
	\node[mypoint] (p4) at (5.5,3.5) {};
	\node[mypoint] (p4a) at (5.8,3.5) {};
	\node[mypoint] (p5) at (6,3.5) {};
	\node (p12) at (0.75,3.1) {};
	\node (p22) at (5.55,3.1) {};
	\draw[very thick] (p2) -- (p3);
	\draw[very thick] (p0) -- (p1);
	\draw[very thick] (p4) -- (p5);
	\node[very thick,label=below:{$E_b$\quad$I_b$\quad$A_b$}] at (3,3.5) {};
	\node[very thick,label=left:{$E_c$}] at (0,2.4) {};
	\node[very thick,label=left:{$I_c$}] at (0,1.8) {};
	\node[very thick,label=left:{$A_c$}] at (0,1.3) {};
	\node[very thick,label=left:{$E_c$}] at (6,2.4) {};
	\node[very thick,label=left:{$I_c$}] at (6,1.8) {};
	\node[very thick,label=left:{$A_c$}] at (6,1.3) {};
	\draw [very thick] (p1a) to[out=270,in=180] (p12) to[out=0,in=270] (p2);
	\draw [very thick] (p1a) to[out=90,in=90] (p1);
	\draw [very thick] (p3) to[out=270,in=180] (p22) to[out=0,in=270] (p4a);
	\draw [very thick] (p4) to[out=90,in=90] (p4a);
	\draw (0,3.8) -- (0,4.2);
	\draw (6,3.8) -- (6,4.2);
	\draw (6.5,0.15) -- (6.9,0.15);
	\draw (6.5,3.5) -- (6.9,3.5);
	\draw[triangle 45-] (0,4) to (2.5,4);
	\draw[-triangle 45] (3.5,4) to (6,4);
	\node at (3,4) {$L_b$};
	\draw[-triangle 45] (6.7,2.2) to (6.7,3.5);
	\draw[triangle 45-] (6.7,0.15) to (6.7,1.5);
	\node at (6.7,1.85) {$L_c$};
\end{tikzpicture}
		\caption[One-bay structural steel frame]{One-bay structural steel frame \cite{Corliss:2007:FRASF,Popova:2007:SLSW}; Example~\ref{ex:frame}}
		\label{fig:frame}
	\end{figure}
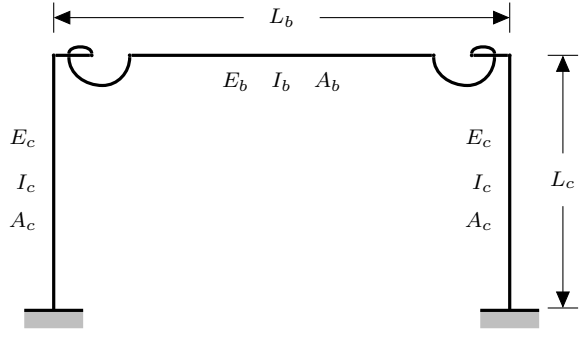
	
	\begin{flalign}
	\label{eq:frame_system}
	& \left(
	\begin{array}{cccccc}
	\frac{A_bE_b}{L_b}+\frac{12E_cI_c}{L^3_c} & 0 & \frac{6E_cI_c}{L^2_c} & 0 & 0 \\[2pt]
	0 & \frac{A_cE_c}{L_c}+\frac{12E_bI_b}{L^3_b} & 0 & \frac{6E_bI_b}{L^2_b} & \frac{6E_bI_b}{L^2_b} \\[2pt] 
	\frac{6E_cI_c}{L^2_c} & 0 & \alpha+\frac{4E_cI_c}{L_c} & -\alpha & 0 \\[2pt]
	0 & \frac{6E_bI_b}{L^2_b} & -\alpha & \alpha+\frac{4E_bI_b}{L_b} & \frac{2E_bI_b}{L_b} \\[2pt]
	0 & \frac{6E_bI_b}{L^2_b} & 0 & \frac{2E_bI_b}{L_b} & \alpha+\frac{4E_cI_c}{L_c} \\[2pt]
	-\frac{A_bE_b}{L_b} & 0 & 0 & 0 & 0 \\[2pt]
	0 & -\frac{12E_bI_b}{L^3_b} & 0 & -\frac{6E_bI_b}{L^2_b} & -\frac{6E_bI_b}{L^2_b} \\[2pt]
	0
	\end{array}
	\right. \\
	&
	\quad\quad\quad\quad\left.
	\begin{array}{cccccc}
	& & & -\frac{A_bE_b}{L_b} & 0 & 0 \\[2pt]
	& & & 0 & -\frac{12E_bI_b}{L^3_b} & 0 \\[2pt]
	& & & 0 & 0 & 0 \\[2pt]
	& & & 0 & -\frac{6E_bI_b}{L^2_b} & 0 \\[2pt]
	& & & 0 & -\frac{6E_bI_b}{L^2_b} & -\alpha \\[2pt]
	& & & \frac{A_bE_b}{L_b}+\frac{12E_cI_c}{L^3_c} & 0 & \frac{6E_cI_c}{L^2_c} \\[2pt]
	& & & 0 & \frac{A_cE_c}{L_c}+\frac{12E_bI_b}{L^3_b} & -\frac{6E_bI_b}{L^2_b}  \\[2pt]
	& & & \frac{6E_cI_c}{L^2_c} & -\frac{6E_bI_b}{L^2_b} & \alpha+\frac{4E_cI_c}{L_c}
	\end{array}
	\right)
	\begin{pmatrix}
	d2_x \\
	d2_y \\
	r2_z \\
	r5_z \\
	r6_z \\
	d3_x \\
	d3_y \\
	r3_z
	\end{pmatrix}=
	\begin{pmatrix}
	H \\
	0 \\
	0 \\
	0 \\
	0 \\
	0 \\
	0 \\
	0
	\end{pmatrix}.\nonumber
	\end{flalign}
	The elements of the system (\ref{eq:frame_system}) are rational functions of Young modulus $E_b$, $E_c$, second moment of area $I_b$, $I_c$, cross-sectional area $A_b$, $A_c$, length $L_b$, $L_c$ and joint stiffness $\alpha$. The right-hand side vector depends on the horizontal force $H$ only. In Corliss et al.~\cite{Corliss:2007:FRASF}, all the parameters, except the lengths, were assumed to be uncertain and varying within given intervals. The nominal values of the model parameters and the worst case uncertainties are given in Table~\ref{tab:frame_param}.
\begin{table}
	\centering
	\caption{Parameters of one-bay structural steel frame: nominal values and worst case uncertainties; Example~\ref{ex:frame}}
	\begin{tabular}{l|l|r}
		\hline
		Parameter & Nominal value & Uncertainty \\
		\hline
		$E_b$, $E_c$ & $29\cdot 10^6$ lbs/in$^2$ & $\pm 348\cdot 10^4$ \\
		$I_b$ & 510 in$^4$ & $\pm 51$ \\
		$I_c$ & 272 in$^4$ & $\pm 27.2$ \\
		$A_b$ & 10.3 in$^2$ & $\pm 1.3$ \\
		$A_c$ & 14.4 in$^2$ & $\pm 1.44$ \\
		$H$   & 5305.5 lbs & $\pm 2203.5$ \\
		$\alpha$ & $2.77461\cdot 10^8$ lb-in/rad & $\pm 1.26504\cdot 10^8$ \\
		\hline
		$L_b$ & 288 in &  \\
		$L_c$ & 144 in &  \\
		\cline{1-2}
	\end{tabular}
	\label{tab:frame_param}
\end{table}

	In order to compare the preconditioning strategies, we solved the system (\ref{eq:frame_system}) with parameter uncertainties, which are 10\%, 20\% and 30\% of the values from the last column of Table~\ref{tab:frame_param}. The best results were produced by using the left and double LU preconditioning. Table~\ref{tab:frame_results} reports the overestimation of PKI$_\textrm{L}$ and PHBR$_\textrm{LU}$ enclosures over PKI$_\textrm{LU}$ enclosure (minus means that PKI$_\textrm{LU}$ overestimates given bounds). For 10\% tolerance, double LU preconditioning improved the PKI$_\textrm{L}$ bounds for 6 out of 8 solution components, the remaining two bounds were a~bit worse. For 20\% tolerance double LU preconditioning improved 7 out of 8 bounds, only the bound for $d3_y$ was slightly worse. For 30\% tolerance double LU preconditioning improved all PKI$_\textrm{L}$ bounds, 6 out of 8 were improved to large extent. Regarding the PHBR$_\textrm{LU}$ method, which often produce rather poor results, it turned out to be useful in this case. For 10\%, 20\% and 30\% tolerances it produced the best bounds for the solution components $d2_x$ and $d3_x$. For 40\% tolerance the PHBR$_\textrm{LU}$ produced the best bounds for 4 out of 8 solution components, whereas for 50\% tolerance the PHBR$_\textrm{LU}$ method produced the best results. Since the PHBR$_\textrm{LU}$ improves the lower bound, it seems reasonable to combine its results with the results of PKI$_\textrm{LU}$, thus obtaining even better bounds.
	\begin{table}
		\centering
		\caption{Results for Example~\ref{ex:frame}: overestimation of PKI$_\textrm{L}$ over PKI$_\textrm{LU}$ enclosure and PHBR$_\textrm{LU}$ over PKI$_\textrm{LU}$ enclosure}
		\begin{tabular}{r|rrrrr|rrrrr}
			\hline
			Solution & \multicolumn{5}{c|}{PKI$_\textrm{L}$ vs PKI$_\textrm{LU}$} & \multicolumn{5}{c}{PHBR$_\textrm{LU}$ vs PKI$_\textrm{LU}$} \\
			\cline{2-11}
			component & 10\% & 20\% & 30\% & 40\% & 50\% & 10\% & 20\% & 30\% & 40\% & 50\% \\
			\hline
			$d2_x$ & 5\%   & 11\%  & 22\%  & 41\% & 87\% & -3\%  & -9\%  & -18\% & -32\% & -52\% \\
			$d2_y$ & -1\%  & 0\%   & 6\%   & 23\% & 81\% & 17\%  & 11\%  & 4\%   & -4\% & -17\% \\
			$r2_z$ & 6\%   & 15\%  & 27\%  & 47\% & 88\% & 16\%  & 10\%  & 1\%   & -11\% & -27\% \\
			$r5_z$ & 6\%   & 16\%  & 30\%  & 50\% & 89\% & 40\%  & 32\%  & 23\%  & 12\% & -3\% \\
			$r6_z$ & 7\%   & 19\%  & 33\%  & 53\% & 90\% & 32\%  & 24\%  & 16\%  & 6\% & -7\% \\
			$d3_x$ & 4\%   & 11\%  & 22\%  & 41\% & 86\% & -2\%  & -8\%  & -17\% & -31\% & -49\% \\
			$d3_y$ & -2\%  & 0\%   & 6\%   & 24\% & 81\% & 26\%  & 20\%  & 13\%  & 5\% & -7\% \\
			$r3_z$ & 4\%   & 12\%  & 24\%  & 44\% & 87\% & 23\%  & 16\%  & 8\%   & -3\% & -18\% \\
			\hline
			\#iter$_\textrm{L}$ & 4 & 5 & 7 & 13 & 65 \\
			\cline{1-6}
			\#iter$_\textrm{LU}$ & 3 & 4 & 6 & 8 & 14 \\
			\cline{1-6}
		\end{tabular}%
		\label{tab:frame_results}%
	\end{table}%
\end{example}

\section{Conclusions}
We have proposed and examined various preconditioning strategies, including double preconditioning based on the decomposition of the midpoint inverse, that aim to improve the numerical properties of interval parametric matrices in the context of solving interval parametric linear systems. We have proposed also a~new approach to solving interval parametric linear systems which employs the considered preconditioning strategies and revised affine forms. The numerical experiments have shown that the proposed approach enables us to solve an extended class of interval parametric linear systems. The obtained results indicate that double LU preconditioning is the most promising, i.e., it enables us to solve problems that cannot be solved by most existing methods for solving parametric interval linear systems. Moreover, it improves the results known in the literature, sometimes known as the only existing ones. It can be observed that the advantage of the double LU preconditioning increases with the increase of uncertainty and that it usually outperforms both SVD and QR based preconditionings. Based on the obtained results it can be concluded that the PKI$_\textrm{LU}$ method is useful for solving practical problems, however, the PHBR$_\textrm{LU}$ method is also suitable.



\bibliographystyle{spmpsci_uns}
\bibliography{lit}

\begin{thebibliography}{10}
\providecommand{\url}[1]{{#1}}
\providecommand{\urlprefix}{URL }
\expandafter\ifx\csname urlstyle\endcsname\relax
  \providecommand{\doi}[1]{DOI~\discretionary{}{}{}#1}\else
  \providecommand{\doi}{DOI~\discretionary{}{}{}\begingroup
  \urlstyle{rm}\Url}\fi

\bibitem{Neumaier:1990:IMS}
Neumaier, A.: Interval Methods for Systems of Equations.
\newblock Cambridge University Press, Cambridge, UK (1990)

\bibitem{HorHla2017a}
Hor\'{a}{\v{c}}ek, J., Hlad\'{\i}k, M., {\v{C}}ern\'{y}, M.: Interval linear
  algebra and computational complexity.
\newblock In: N.~Bebiano (ed.) Applied and Computational Matrix Analysis,
  \emph{Springer Proceedings in Mathematics \& Statistics}, vol. 192, pp.
  37--66. Springer (2017)

\bibitem{KreLak1998}
Kreinovich, V., Lakeyev, A., Rohn, J., Kahl, P.: Computational Complexity and
  Feasibility of Data Processing and Interval Computations.
\newblock Kluwer, Dordrecht (1998)

\bibitem{Rohn:HILP}
Rohn, J.: A handbook of results on interval linear problems.
\newblock Technical Report 1163, Institute of Computer Science, Academy of
  Sciences of the Czech Republic, Prague (2012).
\newblock Http://uivtx.cs.cas.cz/~rohn/publist/!aahandbook.pdf

\bibitem{Kolev:2016:IAFD}
Kolev, L.V.: Iterative algorithms for determining a p-solution of linear
  interval parametric systems.
\newblock In: Advanced Aspects of Theoretical Electrical Engineering,
  15.09.--16.09, pp. 99--104. Sofia, Bulgaria (2016)

\bibitem{Kolev:2014:PSL}
Kolev, L.V.: Parameterized solution of linear interval parametric systems.
\newblock Applied Mathematics and Computation \textbf{246}, 229--246 (2014)

\bibitem{Skalna:2018:PIAS}
Skalna, I.: Parametric Interval Algebraic Systems.
\newblock Springer, Cham (2018)

\bibitem{SkaHla:2017:CMAAF}
Skalna, I., Hlad{\'i}k, M.: {A~new algorithm for Chebyshev minimum-error
  multiplication of reduced affine forms}.
\newblock Numerical Algorithms \textbf{76}(4), 1131--1152 (2017)

\bibitem{VuHaroud:2004:GSCM}
Vu, X.H., Sam-Haroud, D., Faltings, B.: A generic scheme for combining multiple
  inclusion representations in numerical constraint propagation.
\newblock Technical Report No. IC/2004/39, Swiss Federal Institute of
  Technology in Lausanne (EPFL), Lausanne (Switzerland) (2004).
\newblock
  \urlprefix\url{http://liawww.epfl.ch/Publications/Archive/vuxuanha2004a.pdf}

\bibitem{Comba:1993:AAA}
Comba, J.L.D., Stolfi, J.: Affine arithmetic and its applications to computer
  graphics.
\newblock Proc. SIBGRAPI'93 VI Simp{\'{o}}sio Brasileiro de
  Computa{\c{c}}{\~{a}}o Gr{\'{a}}fica e~Processamento de Imagens (Recife, BR)
  pp. 9--18 (1993)

\bibitem{SkaHla2017b}
Skalna, I., Hlad\'{\i}k, M.: A new method for computing a p-solution to
  parametric interval linear systems with affine-linear and nonlinear
  dependencies.
\newblock BIT Numerical Mathematics \textbf{57}(4), 1109--1136 (2017)

\bibitem{Popova:2004:SRPIM}
Popova, E.D.: Strong regularity of parametric interval matrices.
\newblock In: I.D. et~al. (ed.) Mathematics and Education in Mathematics,
  Proceedings of the 33rd Spring Conference of the Union of Bulgarian
  Mathematicians, pp. 446--451. Borovets, Bulgaria, BAS (2004)

\bibitem{Pop2018a}
Popova, E.D.: Enclosing the solution set of parametric interval matrix equation
  {$A(p)X = B(p)$}.
\newblock Numerical Algorithms \textbf{78}(2), 423--447 (2018)

\bibitem{Skalna:2017:SRPM}
Skalna, I.: Strong regularity of parametric interval matrices.
\newblock Linear and Multilinear Algebra \textbf{65}(12), 2472--2482 (2017)

\bibitem{Hladik:2016:OPFIM}
Hlad\'{\i}k, M.: Optimal preconditioning for the interval parametric
  {Gauss}--{Seidel} method.
\newblock In: M.~Nehmeier~et al. (ed.) Scientific Computing, Computer
  Arithmetic and Validated Numerics: 16th International Symposium, {SCAN} 2014,
  W{\"u}rzburg, Germany, September 21-26, \emph{LNCS}, vol. 9553, pp. 116--125.
  Springer (2016)

\bibitem{Gol2005}
Goldsztejn, A.: A right-preconditioning process for the formal-algebraic
  approach to inner and outer estimation of {AE}-solution sets.
\newblock Reliable Computing \textbf{11}(6), 443--478 (2005)

\bibitem{Neu1987}
Neumaier, A.: Overestimation in linear interval equations.
\newblock SIAM Journal on Numerical Analysis \textbf{24}, 207--214 (1987)

\bibitem{Popova:2014:IESP}
Popova, E.D.: Improved enclosure for some parametric solution sets with linear
  shape.
\newblock Computers \& Mathematics with Applications \textbf{68}(9) (2014)

\bibitem{PopHla2013}
Popova, E.D., Hlad{\'\i}k, M.: Outer enclosures to the parametric {AE} solution
  set.
\newblock Soft Computing \textbf{17}(8), 1403--1414 (2013)

\bibitem{Hladik:2012:EFS}
Hlad\'{i}k, M.: Enclosures for the solution set of parametric interval linear
  systems.
\newblock International Journal of Applied Mathematics and Computer Science
  \textbf{22}(3), 561--574 (2012)

\bibitem{AleKre2003}
Alefeld, G., Kreinovich, V., Mayer, G.: On the solution sets of particular
  classes of linear interval systems.
\newblock Journal of Computational and Applied Mathematics \textbf{152}(1-2),
  1--15 (2003)

\bibitem{May2017}
Mayer, G.: Interval Analysis and Automatic Result Verification, \emph{Studies
  in Mathematics}, vol.~65.
\newblock De Gruyter, Berlin (2017)

\bibitem{Hla2008g}
Hlad\'{\i}k, M.: Description of symmetric and skew-symmetric solution set.
\newblock SIAM Journal on Matrix Analysis and Applications \textbf{30}(2),
  509--521 (2008)

\bibitem{HLADIK20191}
Hlad{\'i}k, M., Skalna, I.: Relations between various methods for solving
  linear interval and parametric equations.
\newblock Linear Algebra and its Applications \textbf{574}, 1--21 (2019)

\bibitem{SkaHla2019NLAA}
Skalna, I., Hlad\'{\i}k, M.: Direct and iterative methods for interval
  parametric algebraic systems producing parametric solutions.
\newblock Numerical Linear Algebra with Applications \textbf{26}(3),
  e2229:1--e2229:24 (2019)

\bibitem{Okumura:1993:AIO}
Okumura, K.: An application of interval operations to electric network
  analysis.
\newblock Bull. of the Japan Soc. for Industrial \& Applied Mathematics
  \textbf{2}, 115--127 (1993)

\bibitem{Kolev:1993:IMCA}
Kolev, L.: Interval Methods for Circuit Analysis.
\newblock World Scientific (1993)

\bibitem{Kolev:2002:WCA}
Kolev, L.: Worst-case tolerance analysis of linear {DC} and {AC} electric
  circuits.
\newblock IEEE Transactions on Circuits and Systems I: Fundamental Theory and
  Applications \textbf{49}(12), 1--9 (2002)

\bibitem{Zimmer:2012:SVSPLS}
Zimmer, M., Kr{\"a}mer, W., Popova, E.D.: Solvers for the verified solution of
  parametric linear systems.
\newblock Computing \textbf{94}(2), 109--123 (2012)

\bibitem{PopKolKram:2010:SCVPLS}
Popova, E.D., Kolev, L., Kr{\"a}mer, W.: {A Solver for Complex-Valued
  Parametric Linear Systems}.
\newblock Serdica Journal of Computing \textbf{4}(1) (2010)

\bibitem{Hladk2010SSO}
Hlad{\'i}k, M.: Solution sets of complex linear interval systems of equations.
\newblock Reliable Computing \textbf{14}, 78--87 (2010)

\bibitem{Corliss:2007:FRASF}
Corliss, G., Foley, C., Kearfott, R.B.: {Formulation for Reliable Analysis of
  Structural Frames}.
\newblock Reliable Computing \textbf{13}(2), 125--147 (2007)

\bibitem{Popova:2007:SLSW}
Popova, E.D.: Solving linear systems whose input data are rational functions of
  interval parameters.
\newblock In: T.~Boyanov, S.Dimova, K.~Georgiev, G.~Nikolov (eds.) Numerical
  Methods and Applications: 6th International Conference, NMA 2006, Borovets,
  Bulgaria, August 20-24, 2006. Revised Papers, \emph{LNCS}, vol. 4310, pp.
  345--352. Springer, Berlin, Heidelberg (2007)

\end{thebibliography}

\end{document}